\documentclass[12pt]{amsart}
\usepackage[numbers, square]{natbib}
\usepackage{amssymb}
\usepackage{amsmath}
\usepackage{amsfonts}
\usepackage{geometry}
\usepackage{graphicx}
\usepackage{amsthm}
\usepackage{hyperref}
\usepackage{esint}
\setcounter{MaxMatrixCols}{30}

\usepackage[dvipsnames]{xcolor}

\providecommand{\U}[1]{\protect\rule{.1in}{.1in}}
\marginparwidth -1cm \oddsidemargin -0cm \evensidemargin -0cm
\topmargin -1.5cm \textheight 232mm \textwidth 160mm
\theoremstyle{plain}

\newtheorem{corollary}{Corollary}

\newtheorem{remark}{Remark}

\newtheorem{theorem}{Theorem}
\numberwithin{equation}{section}










\begin{document}
\title[Propagation of smallness in elliptic periodic homogenization ]
{ propagation of smallness in elliptic periodic homogenization  }
\author {Carlos Kenig}
\address{
Department of Mathematics\\
University of Chicago\\
Chicago, IL 60637, USA\\
Email:  cek@math.uchicago.edu }
\author{ Jiuyi Zhu}
\address{
Department of Mathematics\\
Louisiana State University\\
Baton Rouge, LA 70803, USA\\
Email:  zhu@math.lsu.edu }

\thanks{Kenig is supported in part by NSF grant DMS-1800082,
Zhu is supported in part by  NSF grant OIA-1832961}
\date{}
\subjclass[2010]{35B27, 35J61, 35C15.} \keywords {Homogenization, Three-ball theorem, Approximate propagation of smallness, Poisson kernel}

\begin{abstract}
The paper is mainly concerned with an approximate three-ball inequality for solutions in elliptic periodic homogenization. We consider a family of second order operators $\mathcal{L}_\epsilon$ in divergence form with rapidly oscillating and periodic coefficients. It is the first time  such an approximate three-ball inequality  for homogenization theory is obtained. It implies an approximate quantitative propagation of smallness. The proof relies on a representation of the solution by the Poisson kernel and the Lagrange  interpolation technique. Another full propagation of smallness result is also shown.
\end{abstract}

\maketitle
\section{Introduction}
Quantitative propagation of smallness plays an important role in the quantitative study of solutions of elliptic and parabolic equations. It can be stated as follows:  a solution $u$ of a PDE $Lu=0$ on a domain $X$ can be made arbitrarily small on any given compact subset of $X$ by making it sufficiently small on an arbitrary given subdomain $Y$. The quantitative propagation of smallness has found many important applications, such as the stability estimates for the Cauchy problem \cite{ARRV} and the Hausdorff measure estimates of nodal sets of eigenfunctions \cite{Lin}, \cite{Lo}. Hadamard's three-circle or three-ball theorem  is the simplest quantitative statement for propagation of smallness. The Hadamard three-ball theorem ( or three-ball inequality) for a function $u$ is the inequality
\begin{align}
\|u\|_{r_2}\leq C \|u\|^\alpha_{r_1}\|u\|^{1-\alpha}_{R},
\label{thball}
\end{align}
where $0<r_1<r_2<R$ and $\|\cdot\|_{r}$ is the $L^2$ or $L^\infty$ norm on the ball centered at the origin with radius $r$, $\alpha\in (0,\ 1)$ and $C$ is a constant depending on $r_1$, $r_2$, and $R$. We will call the inequality (\ref{thball}) the standard three-ball inequality.
For holomorphic functions, the three-ball inequality (\ref{thball}) is a consequence of the convexity of the logarithm of the modulus of holomorphic functions and the maximum principle. The strategy of using logarithmic convexity of the modulus has played a central role in obtaining three-ball theorems (or three-ball inequalities).

For solutions of second order elliptic equations
 \begin{align}
Lu= -D_j(a_{ij}(x)D_i u)+ b_i D_i u +c(x) u=0,
\label{second}
\end{align}
(the summation convention is used throughout the paper), the logarithmic convexity of the $L^2$-norm of solutions has been applied in different ways to obtain the three-ball inequality by Agmon \cite{Ag} (for $L^2$-norm) and Landis \cite{L} (for $L^\infty$-norm).
For harmonic functions in higher dimensions, Korevaar and Meyers \cite{KM} obtained the three-ball inequality in (\ref{thball}) with sharp coefficient $C=1$  using the logarithmic convexity of the $L^2$ norm and the expansion on eigenfunctions of the Laplace-Beltrami operator on the sphere. Brummelhuis \cite{B} further pushed this technique to prove three-ball inequalities for solutions to second order elliptic equations.

The frequency function and Carleman estimates are  tools often used to obtain a three-ball inequality. The frequency function is given as a quotient involving $L^2$-norms of solutions. The monotonicity of the frequency function implies the convexity of the logarithm of $L^2$-norm of solutions, see e.g. \cite{Al}, \cite{GL}, \cite{Z}. Carleman estimates are weighted integral inequalities with suitable weight functions satisfying some convexity  properties. The three-ball inequality is obtained by applying the Carleman estimates to the product of the solution and an appropriate cut-off function and then by choosing the Carleman parameter appropriately, see e.g. \cite{DF}, \cite{JL}, \cite{K}. Recently, a proof of the three-ball inequality using the Poisson kernel for harmonic functions  was developed by Guadie and Malinnikova \cite{GM}.

The theory of homogenization identifies the average, macroscopic behavior of a phenomenon that is subject to microscopic effects. Homogenization not only has important impacts in applications, e.g. material sciences and elasticity theory,  but also shows potential applications to other areas of mathematics, such as  stochastic homogenization and numerical homogenization.
 In order to obtain the standard three-ball inequality (\ref{thball}), the Lipschitz continuity of the leading coefficient $a_{ij}(x)$ is required in the usual proof. This is consistent with the fact that there exist operators with H\"older continuous coefficients $a_{ij}$  with non-trivial solutions vanishing on open sets, which is precluded by (\ref{thball}), see e.g. \cite{M}.
 All the aforementioned techniques result in taking the derivative of the leading coefficient  $a_{ij}(x)$. The situation for the three-ball inequality for elliptic homogenization changes drastically since the derivative of the leading coefficient $a_{ij}(\frac{x}{\epsilon})$ leads to a ``bad" term $\frac{1}{\epsilon}$. In this paper, we intend to study the propagation of smallness for solutions in elliptic periodic homogenization. To achieve this, the main goal of this paper is to address a version of the three-ball inequality for elliptic periodic homogenization.

 We consider a family of elliptic operators in divergence form with rapidly oscillating periodic coefficients
\begin{align}
\mathcal{L}_\epsilon u=-{\rm div}\big(A(\frac{x}{\epsilon})\nabla u_\epsilon\big) =0 \quad \mbox{in} \ \Omega,
\label{main}
\end{align}
where $\epsilon>0$, $\Omega$ is a bounded domain in $\mathbb R^d$, and $A(y)=\big( a_{ij}(y)\big)$ is a symmetric $d\times d$ matrix-valued function in $\mathbb R^d$ for $d\geq 2$. Assume that $A(y)$ satisfies the following assumptions:

(i) Ellipticity: For some $0<\lambda<1$ and all $y, \xi\in \mathbb R^d,$ it holds that
\begin{align}
\lambda |\xi|^2\leq \langle A(y)\xi, \ \xi \rangle\leq \frac{1}{\lambda} |\xi|^2. \quad \quad
\label{ellip}
\end{align}

(ii) Periodicity:
\begin{align}
A(y+z)=A(y) \quad \quad \mbox{for} \ y\in  \mathbb R^d \ \mbox{and} \ z\in \mathbb Z^d.
\label{perio}
\end{align}

(iii) H\"older continuity: There exist constants $\tau>0$ and $0<\mu<1$ such that
\begin{align}
|A(x)-A(y)|\leq \tau |x-y|^\mu
\label{holder}
\end{align}
for any $x, y\in \mathbb R^d$.

We are able to establish the following approximate three-ball inequality in ellipsoids. The definition of ellipsoids $E_{r}$ depending on the coefficients $A(y)$ is given in Section 2.
\begin{theorem}
Let $u_\epsilon$ be a solution of (\ref{main}) in $\mathbb B_{10}$. For $0<r_1<r_2<\frac{ R}{4}<1$,
\begin{align}
\sup_{E_{r_2}}|u_\epsilon|\leq C \big \{\frac{r_2}{R} (\sup_{E_{r_1}}|u_\epsilon|)^\alpha (\sup_{E_{R}}|u_\epsilon|)^{1-\alpha}+ \frac{R^2}{r^2_1}[\epsilon \ln (\epsilon^{-1}+2)]^\alpha \sup_{E_{R}}|u_\epsilon|\big\},
\end{align}
where $\alpha= \frac{\ln \frac{R}{2r_2}} {\ln \frac{R}{r_1} }$ and $C$ depends on $\lambda$ and $(\tau, \mu)$.
\end{theorem}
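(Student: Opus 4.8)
The plan is to reduce to the classical three-ball inequality for harmonic functions by replacing $u_\epsilon$ with a solution of the homogenized equation, and then to prove the harmonic case by the Poisson-kernel/polynomial-approximation method of Guadie--Malinnikova \cite{GM}. Write $\delta=\epsilon\ln(\epsilon^{-1}+2)$; we may assume $\delta$ is below a fixed constant depending only on $\lambda,\tau,\mu$, since otherwise $\frac{R^2}{r_1^2}\delta^\alpha\sup_{E_R}|u_\epsilon|\ge\sup_{E_R}|u_\epsilon|\ge\sup_{E_{r_2}}|u_\epsilon|$ and the inequality is trivial. Let $\mathcal{L}_0=-\operatorname{div}(\widehat A\nabla\cdot)$ be the homogenized operator, $\widehat A$ its constant, symmetric, elliptic coefficient matrix, and recall that $E_r$ is the image of the Euclidean ball $B_r$ under $x=\widehat A^{1/2}y$, so that in the $y$-variables $\mathcal{L}_0$ becomes the Laplacian and $\mathcal{L}_0$-solutions on $E_r$ become harmonic functions on $B_r$. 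Let $v$ be the $\mathcal{L}_0$-harmonic function on $E_R$ with the same boundary values as $u_\epsilon$ on $\partial E_R$ --- equivalently, the Poisson integral of $u_\epsilon|_{\partial E_R}$ against the Poisson kernel of $\mathcal{L}_0$. By the maximum principle $\sup_{E_R}|v|\le\sup_{E_R}|u_\epsilon|$, and the $L^\infty$ convergence-rate estimate in periodic homogenization (with the logarithmic loss $\ln(\epsilon^{-1}+2)$ typical of Hölder coefficients; one route is to compare the Poisson kernels of $\mathcal{L}_\epsilon$ and $\mathcal{L}_0$ on $E_R$) gives
\[
\sup_{E_{3R/4}}|u_\epsilon-v|\ \le\ C\,\delta\,\sup_{E_R}|u_\epsilon| .
\]

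The next step is the three-ball inequality for the harmonic function $\widetilde v(y):=v(\widehat A^{1/2}y)$ on $B_R$. I would take $p_N$ to be its Taylor polynomial of degree $N$ at the origin; since $\widetilde v$ is harmonic, the Cauchy estimates give the geometric tail bound $\sup_{B_s}|\widetilde v-p_N|\le C(s/R)^{N+1}\sup_{B_R}|\widetilde v|$ for $s\le R/2$. For the polynomial part, restricting $p_N$ to lines through the origin and using the extremal property of Chebyshev polynomials (this is where Lagrange interpolation enters) yields $\sup_{B_{r_2}}|p_N|\le(2r_2/r_1)^N\sup_{B_{r_1}}|p_N|$. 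Combining the two bounds (with $s=r_1$ and $s=r_2$) gives, for every integer $N\ge1$,
\[
\sup_{B_{r_2}}|\widetilde v|\ \le\ \Big(\frac{2r_2}{r_1}\Big)^N\sup_{B_{r_1}}|\widetilde v|\ +\ C\,\frac{r_2}{R}\Big(\frac{2r_2}{R}\Big)^N\sup_{B_R}|\widetilde v| .
\]
Because $r_2<R/4$, the first term increases and the second decreases in $N$; choosing $N$ to balance them, and using the identity $1-\alpha=\ln(2r_2/r_1)/\ln(R/r_1)$ (equivalent to the stated value of $\alpha$), produces the three-ball inequality for $v$ in the form of the theorem with $\delta=0$, namely $\sup_{E_{r_2}}|v|\le C\frac{r_2}{R}(\sup_{E_{r_1}}|v|)^\alpha(\sup_{E_R}|v|)^{1-\alpha}$.

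The last step is to restore the homogenization error. From the first display, $\sup_{E_{r_2}}|u_\epsilon|\le\sup_{E_{r_2}}|v|+C\delta\sup_{E_R}|u_\epsilon|$, $\sup_{E_{r_1}}|v|\le\sup_{E_{r_1}}|u_\epsilon|+C\delta\sup_{E_R}|u_\epsilon|$ and $\sup_{E_R}|v|\le\sup_{E_R}|u_\epsilon|$. Inserting these into the three-ball inequality for $v$, using $(a+b)^\alpha\le a^\alpha+b^\alpha$ to split off the $\delta$-contribution, and tracking how the Lagrange amplification $(2r_2/r_1)^N$ --- at the balancing value of $N$ --- multiplies the $\delta\sup_{E_R}|u_\epsilon|$ piece (the optimization redistributes $\delta$ through the exponent $\alpha$, while the remaining amplification is crudely bounded by a polynomial in $R/r_1$), one is left with the interpolation term $\frac{r_2}{R}(\sup_{E_{r_1}}|u_\epsilon|)^\alpha(\sup_{E_R}|u_\epsilon|)^{1-\alpha}$ plus an error term at most $\frac{R^2}{r_1^2}\delta^\alpha\sup_{E_R}|u_\epsilon|$, which is the assertion.

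The main obstacle is precisely that one is forbidden from differentiating the rapidly oscillating coefficient $A(x/\epsilon)$ --- this is what produces the illegitimate factor $\epsilon^{-1}$ in the classical arguments --- so the entire scheme must be routed through the Poisson representation and a quantitative homogenization estimate. The hard part is obtaining the comparison with the harmonic $v$ with the error controlled only by $\sup_{E_R}|u_\epsilon|$ and with the sharp rate $\epsilon\ln(\epsilon^{-1}+2)$; this is why one works with the Poisson kernels and their convergence (a corrector-based interior estimate would instead require $C^{1,\mu}$ control of the boundary data, which is unavailable from a mere sup bound). A secondary difficulty is the bookkeeping in the last step: after optimizing the polynomial degree $N$, one must check that the homogenization error enters with the interpolation power $\alpha$ and is amplified by no worse than the polynomial factor $R^2/r_1^2$.
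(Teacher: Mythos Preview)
Your overall strategy---pass to an $\mathcal{L}_0$-harmonic comparison function, prove the three-ball inequality for it, then restore the homogenization error---is sound, and your harmonic step (Taylor truncation plus the Chebyshev bound on lines) is a legitimate variant of the Guadie--Malinnikova argument. The paper, by contrast, never isolates a single comparison function: it applies Lagrange interpolation with Chebyshev nodes directly to the Poisson kernel $P_0(\,\cdot\,,y)$ in the first variable, then represents $u_\epsilon$ via $P_\epsilon$ and uses the Kenig--Lin--Shen expansion $P_\epsilon(x,y)=P_0(x,y)\,w_\epsilon(y)+R_\epsilon(x,y)$. Because $w_\epsilon$ depends only on the boundary variable $y$, it factors out of the interpolation identity, and one arrives directly at
\[
\sup_{E_{r_2}}|u_\epsilon|\le C\Bigl\{(2r_2/r_1)^m\sup_{E_{r_1}}|u_\epsilon|+(2r_2/R)^m\sup_{E_R}|u_\epsilon|+\delta\,(2r_2/r_1)^m\sup_{E_R}|u_\epsilon|\Bigr\},
\]
after which a two-case optimization in $m$ finishes.

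The genuine gap in your argument is the convergence-rate step. If $v$ is the $\mathcal{L}_0$-harmonic function with boundary data $u_\epsilon|_{\partial E_R}$, then
\[
u_\epsilon(x)-v(x)=\int_{\partial E_R}P_0(x,y)\bigl(w_\epsilon(y)-1\bigr)u_\epsilon(y)\,dy+\int_{\partial E_R}R_\epsilon(x,y)\,u_\epsilon(y)\,dy,
\]
and only the second integral is $O(\delta)\sup_{E_R}|u_\epsilon|$. The weight $w_\epsilon(y)=(\widehat a_{ij}n_in_j)^{-1}\,\partial_n\Psi_{\epsilon,k}\,n_k\,a_{ij}(y/\epsilon)n_in_j$ is bounded but oscillates at scale $\epsilon$ with amplitude of order one, and $u_\epsilon$ on $\partial E_R$ oscillates on the same scale; there is no reason for the first integral to be small, so the estimate $\sup_{E_{3R/4}}|u_\epsilon-v|\le C\delta\sup_{E_R}|u_\epsilon|$ fails for this choice of $v$. (Standard $L^\infty$ convergence-rate theorems require more regularity of the boundary data than a mere sup bound, exactly as you anticipate in your closing paragraph.) The fix is to take instead $v(x)=\int_{\partial E_R}P_0(x,y)\,w_\epsilon(y)\,u_\epsilon(y)\,dy$; then $|u_\epsilon-v|\le C\delta\sup_{E_R}|u_\epsilon|$ on $E_{3R/4}$ from the $R_\epsilon$ bound, and $\sup_{E_R}|v|\le C\sup_{E_R}|u_\epsilon|$ since $w_\epsilon$ is bounded. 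With this correction the rest of your scheme goes through---indeed, feeding $\sup_{E_{r_1}}|v|\le\sup_{E_{r_1}}|u_\epsilon|+C\delta\sup_{E_R}|u_\epsilon|$ into the three-ball inequality for $v$ and using $(a+b)^\alpha\le a^\alpha+b^\alpha$ yields the error term with prefactor $r_2/R$ rather than $R^2/r_1^2$, so once repaired your route is arguably cleaner than the paper's two-case optimization.
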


Let $\mathbb B_r(0)$ be the ball centered at the origin with radius $r$. We usually write it as $\mathbb B_r$ if the context is understood. Since balls are more convenient in applications than ellipsoids, a direct consequence of Theorem 1 is the following approximate three-ball inequality in balls.
\begin{corollary}
Let $u_\epsilon$ be a solution of (\ref{main}) in $\mathbb B_{10}$. For $0<R_1<R_2<\frac{\lambda R_3}{4}<\frac{\lambda}{4}$,
\begin{align}
\|u_\epsilon\|_{L^\infty(\mathbb B_{R_2})}\leq C\big\{\frac{ R_2}{R_3} \|u_\epsilon\|^\beta_{L^\infty(\mathbb B_{R_1})} \|u_\epsilon\|^{1-\beta}_{L^\infty(\mathbb B_{R_3})} +\frac{R^2_3}{R^2_1} [\epsilon \ln (\epsilon^{-1}+2)]^\beta \|u_\epsilon\|_{L^\infty(\mathbb B_{R_3})}\big\},
\label{result}
\end{align}
where $\beta=\frac{\ln \frac{\lambda R_3}{2R_2}} {\ln \frac{R_3}{R_1} }$ and $C$ depends on $\lambda$ and $(\tau, \mu)$.

\end{corollary}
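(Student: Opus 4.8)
The plan is to obtain the corollary directly from Theorem 1 by sandwiching the three Euclidean balls $\mathbb B_{R_1}\subseteq\mathbb B_{R_2}\subseteq\mathbb B_{R_3}$ between three concentric ellipsoids $E_{r_1}\subseteq E_{r_2}\subseteq E_{R}$, so that the ellipsoidal three–ball inequality transfers to balls by monotonicity of $\sup$ under set inclusion. The only input needed about the ellipsoids is the two–sided comparison with balls that follows from the ellipticity bound (\ref{ellip}): since the matrix entering the definition of $E_r$ in Section 2 (whether $A(0)$ or the homogenized matrix) inherits the bounds $\lambda I\le M\le\lambda^{-1}I$, there is a constant $c_0=c_0(\lambda)\in(0,1)$ with
\begin{equation*}
\mathbb B_{c_0 r}\subseteq E_r\subseteq\mathbb B_{r/c_0}\qquad\text{for every }r>0 ,
\end{equation*}
and with the normalization used in Section 2 one has $c_0=\sqrt\lambda$ (if a different normalization is used there, only this containment is needed and the radii below are rescaled accordingly).

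Given $0<R_1<R_2<\frac{\lambda R_3}{4}<\frac{\lambda}{4}$, I would set
\begin{equation*}
r_1=\sqrt\lambda\,R_1,\qquad r_2=\frac{R_2}{\sqrt\lambda},\qquad R=\sqrt\lambda\,R_3 ,
\end{equation*}
and first check that $(r_1,r_2,R)$ is admissible for Theorem 1, i.e.\ $0<r_1<r_2<\frac R4<1$: the inequality $r_1<r_2$ amounts to $\lambda R_1<R_2$ (true since $\lambda<1<R_2/R_1$); the inequality $r_2<\frac R4$ is \emph{exactly} the hypothesis $R_2<\frac{\lambda R_3}{4}$; and $\frac R4<1$ holds because $R_3<1$ and $\lambda<1$ give $R=\sqrt\lambda R_3<1$. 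By the displayed containment with $c_0=\sqrt\lambda$ one gets $\mathbb B_{R_2}\subseteq E_{r_2}$, $E_{r_1}\subseteq\mathbb B_{R_1}$, and $E_{R}\subseteq\mathbb B_{R_3}$; moreover $\frac{R}{2r_2}=\frac{\lambda R_3}{2R_2}$ and $\frac{R}{r_1}=\frac{R_3}{R_1}$, so the exponent of Theorem 1 is precisely
\begin{equation*}
\alpha=\frac{\ln\frac{R}{2r_2}}{\ln\frac{R}{r_1}}=\frac{\ln\frac{\lambda R_3}{2R_2}}{\ln\frac{R_3}{R_1}}=\beta\in(0,1).
\end{equation*}

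It then remains to invoke Theorem 1 for this choice of radii and replace ellipsoidal suprema by ball suprema. Using $\mathbb B_{R_2}\subseteq E_{r_2}$ on the left, $E_{r_1}\subseteq\mathbb B_{R_1}$ and $E_R\subseteq\mathbb B_{R_3}$ on the right, together with $\frac{r_2}{R}=\frac{1}{\lambda}\frac{R_2}{R_3}$ and $\frac{R^2}{r_1^2}=\frac{R_3^2}{R_1^2}$,
\begin{align*}
\sup_{\mathbb B_{R_2}}|u_\epsilon|
&\le\sup_{E_{r_2}}|u_\epsilon|
\le C\Big\{\tfrac{r_2}{R}\,(\sup_{E_{r_1}}|u_\epsilon|)^{\beta}(\sup_{E_{R}}|u_\epsilon|)^{1-\beta}+\tfrac{R^2}{r_1^2}\,[\epsilon\ln(\epsilon^{-1}+2)]^{\beta}\sup_{E_{R}}|u_\epsilon|\Big\}\\
&\le C\Big\{\tfrac1\lambda\tfrac{R_2}{R_3}\,\|u_\epsilon\|_{L^\infty(\mathbb B_{R_1})}^{\beta}\|u_\epsilon\|_{L^\infty(\mathbb B_{R_3})}^{1-\beta}+\tfrac{R_3^2}{R_1^2}\,[\epsilon\ln(\epsilon^{-1}+2)]^{\beta}\|u_\epsilon\|_{L^\infty(\mathbb B_{R_3})}\Big\},
\end{align*}
and absorbing the factor $\lambda^{-1}$ into $C$ (allowed, since $C$ may depend on $\lambda$) yields (\ref{result}) with the stated $\beta$.

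I do not expect a genuine obstacle here: the argument is essentially bookkeeping built on (\ref{ellip}). The only points requiring care are (a) pinning down the ellipsoid–ball comparison constant from the Section 2 definition so that Theorem 1 produces \emph{exactly} the exponent $\beta$ and the prefactors collapse to $\frac{R_2}{R_3}$ and $\frac{R_3^2}{R_1^2}$ rather than to $\lambda$-distorted versions, and (b) verifying that the intermediate radii still satisfy the full chain $0<r_1<r_2<\frac R4<1$ demanded by Theorem 1 — which is exactly why the hypothesis is phrased as $R_2<\frac{\lambda R_3}{4}<\frac{\lambda}{4}$ and not merely $R_2<\frac{R_3}{4}<1$.
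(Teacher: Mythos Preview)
Your proposal is correct and follows essentially the same route as the paper: both arguments use the containment $\mathbb B_{\sqrt\lambda\,r}\subset E_r\subset\mathbb B_{r/\sqrt\lambda}$ from (\ref{ellba}) and the substitution $r_1=\sqrt\lambda\,R_1$, $r_2=R_2/\sqrt\lambda$, $R=\sqrt\lambda\,R_3$ (the paper writes the inverse change of variables $R_1=r_1/\sqrt\lambda$, $R_2=\sqrt\lambda\,r_2$, $R_3=R/\sqrt\lambda$), which makes the Theorem~1 exponent $\alpha$ coincide exactly with $\beta$ and turns the prefactors into $\tfrac{R_2}{R_3}$ and $\tfrac{R_3^2}{R_1^2}$ after absorbing a $\lambda^{-1}$ into $C$. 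Your verification that $r_2<\tfrac R4$ is \emph{equivalent} to the hypothesis $R_2<\tfrac{\lambda R_3}{4}$ is precisely the reason the paper imposes that sharpened range on the ball radii.
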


Let us address some important issues about the difference between the standard three-ball inequality (\ref{thball}) and inequality (\ref{result}).
\begin{remark}
The three-ball inequality (\ref{result}) in Corollary 1 is different from the standard three-ball inequality in (\ref{thball}). The inequality (\ref{thball}) implies the weak unique continuation property, which states that the solution vanishes globally if it vanishes in an open set. However, the estimate (\ref{result}) does not.    As $\epsilon \to 0$, the inequality (\ref{result}) converges to the standard three-ball inequality in (\ref{thball}). Compared with the Lipschitz regularity needed to obtain the inequality (\ref{thball}), only H\"older continuity is imposed to obtain the inequality (\ref{result}).
\end{remark}

If we choose $R_1, R_2, R_3$ in an appropriate scale, e.g. let
 $R_1=r$, $R_2=2r$ and $R_3=\frac{9r}{\lambda}$, we have the following corollary.
\begin{corollary}
Let $u_\epsilon$ be a solution of (\ref{main}) in $\mathbb B_{10}$. For $r\leq \frac{\lambda}{9},$
\begin{align}
\|u_\epsilon\|_{L^\infty(\mathbb B_{2r})}\leq C\big\{ \|u_\epsilon\|^\beta_{L^\infty(\mathbb B_{r})} \|u_\epsilon\|^{1-\beta}_{L^\infty(\mathbb B_{\frac{9r}{\lambda}})} + [\epsilon \ln (\epsilon^{-1}+2)]^\beta \|u_\epsilon\|_{L^\infty(\mathbb B_{\frac{9r}{\lambda}})}\big\},
\label{result2}
\end{align}
where  $\beta=\frac{\ln \frac{9}{4}} {\ln \frac{9}{\lambda}}$ and $C$ depends on $\lambda$ and $(\tau, \mu)$.
\end{corollary}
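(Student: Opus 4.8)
The plan is to obtain Corollary~2 as a direct specialization of Corollary~1, taking $R_1=r$, $R_2=2r$ and $R_3=\frac{9r}{\lambda}$, and then simplifying the resulting constants. First I would check that this choice meets the hypotheses of Corollary~1, namely $0<R_1<R_2<\frac{\lambda R_3}{4}<\frac{\lambda}{4}$. Indeed $R_1=r<2r=R_2$ trivially, while $\frac{\lambda R_3}{4}=\frac{\lambda}{4}\cdot\frac{9r}{\lambda}=\frac{9r}{4}$, so $R_2=2r<\frac{9r}{4}=\frac{\lambda R_3}{4}$ since $2<\frac94$, and $\frac{\lambda R_3}{4}=\frac{9r}{4}<\frac{\lambda}{4}$ precisely when $r<\frac{\lambda}{9}$. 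Also $R_3=\frac{9r}{\lambda}\le 1<10$, so $\mathbb B_{R_3}\subset\mathbb B_{10}$ and the corollary applies.

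Next I would compute the exponent and the prefactors under this scaling. For the exponent,
\begin{align}
\beta=\frac{\ln\frac{\lambda R_3}{2R_2}}{\ln\frac{R_3}{R_1}}
=\frac{\ln\frac{\lambda\cdot 9r/\lambda}{2\cdot 2r}}{\ln\frac{9r/\lambda}{r}}
=\frac{\ln\frac{9}{4}}{\ln\frac{9}{\lambda}},
\end{align}
which is exactly the $\beta$ in \eqref{result2}, and is a constant depending only on $\lambda$. For the prefactors in \eqref{result}, we have $\frac{R_2}{R_3}=\frac{2r}{9r/\lambda}=\frac{2\lambda}{9}\le 1$ because $0<\lambda<1$, so this factor may simply be bounded by $1$; and $\frac{R_3^2}{R_1^2}=\big(\frac{9r/\lambda}{r}\big)^2=\frac{81}{\lambda^2}$ depends only on $\lambda$, hence can be absorbed into the constant $C$. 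Substituting these into \eqref{result} yields \eqref{result2} with a constant $C$ depending on $\lambda$ and $(\tau,\mu)$.

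The only mild subtlety — and it is not a real obstacle — is that Corollary~1 is stated with the strict inequality $\frac{\lambda R_3}{4}<\frac{\lambda}{4}$, i.e.\ with $r<\frac{\lambda}{9}$, whereas Corollary~2 allows the endpoint $r=\frac{\lambda}{9}$. This is handled by a limiting argument: for a sequence $r_n\uparrow\frac{\lambda}{9}$, inequality \eqref{result2} holds with radii $r_n,2r_n,\frac{9r_n}{\lambda}$, and since $\rho\mapsto\|u_\epsilon\|_{L^\infty(\mathbb B_\rho)}$ is nondecreasing and left-continuous in $\rho$ while $\beta$ is independent of $r$, letting $n\to\infty$ gives \eqref{result2} at $r=\frac{\lambda}{9}$. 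Thus the proof reduces entirely to the bookkeeping of the substitution above, with no new analytic input required.
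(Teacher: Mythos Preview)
Your proposal is correct and follows exactly the approach indicated in the paper, which simply states that Corollary~2 arises from Corollary~1 by taking $R_1=r$, $R_2=2r$, $R_3=\frac{9r}{\lambda}$ without further detail. Your verification of the hypotheses, computation of $\beta$, absorption of the prefactors $\frac{R_2}{R_3}=\frac{2\lambda}{9}$ and $\frac{R_3^2}{R_1^2}=\frac{81}{\lambda^2}$ into $C$, and the limiting argument for the endpoint $r=\frac{\lambda}{9}$ are all sound and in fact more careful than what the paper spells out.
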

One can consider the eigenvalue type equation
\begin{align}
-{\rm div}\big(A(\frac{x}{\epsilon})\nabla u_\epsilon \big) =\lambda_{\epsilon, k} u_\epsilon  \quad \quad \mbox{in} \ \mathbb B_{10}.
\label{eigen}
\end{align}
Assume that $\lambda_{\epsilon, k}$ are positive constants and $\lambda_{\epsilon, k}\to\infty$ as $k\to \infty$. The following corollary holds.
\begin{corollary}
Let $u_\epsilon$ be a solution of (\ref{eigen}) in $\mathbb B_{10}$. For $0<R_1<R_2<\frac{\lambda R_3}{4}<\frac{\lambda}{4}$,
\begin{align}
\|u_\epsilon\|_{L^\infty(\mathbb B_{R_2})}&\leq C e^{2R_3 \sqrt{\lambda_{\epsilon, k}} }\big\{\frac{ R_2}{R_3} \|u_\epsilon\|^\beta_{L^\infty(\mathbb B_{R_1})} \|u_\epsilon\|^{1-\beta}_{L^\infty(\mathbb B_{R_3})} \nonumber \\ &+\frac{R^2_3}{R^2_1} [\epsilon \ln (\epsilon^{-1}+2)]^\beta \|u_\epsilon\|_{L^\infty(\mathbb B_{R_3})}\big\},
\end{align}
where $\beta=\frac{\ln \frac{\lambda R_3}{2R_2}} {\ln \frac{R_3}{R_1} }$ and $C$ depends on $\lambda$ and $(\tau, \mu)$.
\label{cor2}
\end{corollary}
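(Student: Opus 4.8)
The plan is to reduce the eigenvalue equation \eqref{eigen} to the divergence-form equation \eqref{main} in one dimension higher, so that Corollary 1 (inequality \eqref{result}) applies directly. Concretely, given a solution $u_\epsilon$ of \eqref{eigen} in $\mathbb B_{10}\subset\mathbb R^d$, I would introduce the auxiliary function
\begin{align}
v_\epsilon(x,t) = u_\epsilon(x)\,\cosh\!\big(\sqrt{\lambda_{\epsilon,k}}\;t\big), \qquad (x,t)\in \mathbb R^{d+1},
\end{align}
and observe that $v_\epsilon$ solves a homogenized divergence-form equation $-{\rm div}_{(x,t)}\big(\widetilde A(\tfrac{x}{\epsilon})\nabla_{(x,t)} v_\epsilon\big)=0$, where $\widetilde A(y) = A(y)\oplus 1$ is the block-diagonal $(d+1)\times(d+1)$ matrix obtained by adjoining a $1$ in the $t$-direction. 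Indeed $-{\rm div}_x(A(\tfrac{x}{\epsilon})\nabla_x v_\epsilon) = -\cosh(\sqrt{\lambda_{\epsilon,k}}t)\,{\rm div}_x(A(\tfrac{x}{\epsilon})\nabla_x u_\epsilon) = \lambda_{\epsilon,k} u_\epsilon \cosh(\sqrt{\lambda_{\epsilon,k}}t)$, while $-\partial_t^2 v_\epsilon = -\lambda_{\epsilon,k} u_\epsilon\cosh(\sqrt{\lambda_{\epsilon,k}}t)$, and these cancel. The matrix $\widetilde A$ plainly inherits ellipticity \eqref{ellip} (with the same $\lambda$), periodicity \eqref{perio} (now with respect to $\mathbb Z^{d+1}$, being $t$-independent), and Hölder continuity \eqref{holder} (with the same $\tau,\mu$), so Corollary 1 is applicable to $v_\epsilon$ on balls in $\mathbb R^{d+1}$.

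Next I would apply inequality \eqref{result} to $v_\epsilon$ on the concentric balls $\mathbb B_{R_1}, \mathbb B_{R_2}, \mathbb B_{R_3}$ in $\mathbb R^{d+1}$ (note $R_3<1$, so these sit inside the domain of definition). The point is now to translate the $(d+1)$-dimensional sup-norms of $v_\epsilon$ back into $d$-dimensional sup-norms of $u_\epsilon$. Since $\cosh$ is even, increasing on $[0,\infty)$, and bounded below by $1$, for any $\rho\le R_3$ one has
\begin{align}
\sup_{\mathbb B_\rho^{(d)}}|u_\epsilon| \;\le\; \sup_{\mathbb B_\rho^{(d+1)}}|v_\epsilon| \;\le\; \cosh\!\big(\sqrt{\lambda_{\epsilon,k}}\,\rho\big)\,\sup_{\mathbb B_\rho^{(d)}}|u_\epsilon| \;\le\; e^{R_3\sqrt{\lambda_{\epsilon,k}}}\,\sup_{\mathbb B_\rho^{(d)}}|u_\epsilon|,
\end{align}
using that the slice $t=0$ of $\mathbb B_\rho^{(d+1)}$ is $\mathbb B_\rho^{(d)}$ for the left inequality. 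Substituting the upper bounds on the right-hand side of \eqref{result} (for $\mathbb B_{R_1}$ and, twice, $\mathbb B_{R_3}$) and the lower bound on the left-hand side (for $\mathbb B_{R_2}$), the exponent of the $\cosh$ factors accumulates as $\beta + (1-\beta) + 1 = 2$ in the first term and as $1$ in the second term; absorbing the latter into the former (both are $\le e^{2R_3\sqrt{\lambda_{\epsilon,k}}}$) yields exactly the claimed inequality, with the same $\beta = \frac{\ln(\lambda R_3/2R_2)}{\ln(R_3/R_1)}$ and the same dependence of $C$ on $\lambda,(\tau,\mu)$ — the constant is untouched because $\widetilde A$ has the same structural constants as $A$.

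The only genuinely delicate point is the bookkeeping of the exponential factors: one must be careful that the $\cosh$ weight is applied on the correct side of each norm (upper bound where $v_\epsilon$ appears on the large side, trivial lower bound $|u_\epsilon|\le|v_\epsilon|$ where it appears on the small side) so that the powers of $e^{R_3\sqrt{\lambda_{\epsilon,k}}}$ add rather than subtract, and that raising to the powers $\beta$ and $1-\beta$ is handled before collecting the exponentials. A minor secondary check is that the dimensional increase $d\mapsto d+1$ does not affect the hypothesis $d\ge 2$ of the earlier results — it does not, since $d\ge 2$ implies $d+1\ge 3\ge 2$. Everything else is a direct quotation of Corollary 1.
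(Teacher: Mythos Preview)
Your proposal is correct and follows essentially the same lifting argument as the paper: the paper introduces $v_\epsilon(x,t)=e^{\sqrt{\lambda_{\epsilon,k}}\,t}u_\epsilon(x)$ rather than your $u_\epsilon(x)\cosh(\sqrt{\lambda_{\epsilon,k}}\,t)$, but in both cases $v_\epsilon$ solves the $(d{+}1)$-dimensional homogenized equation with coefficient matrix $A\oplus 1$, and Corollary~1 is applied directly. Your exponent bookkeeping $\beta+(1-\beta)+1=2$ is slightly off (with the $\cosh$ lift the slice $t=0$ gives $|u_\epsilon|\le|v_\epsilon|$ for free, so only $\beta+(1-\beta)=1$ copies of $e^{R_3\sqrt{\lambda_{\epsilon,k}}}$ are actually incurred), but this overcounting is harmless since it still yields the stated bound $e^{2R_3\sqrt{\lambda_{\epsilon,k}}}$.
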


Thanks to the three-ball inequality (\ref{result2}), we can show an approximate quantitative propagation of smallness result for solutions of (\ref{main}).
\begin{corollary}
Let $u_\epsilon$ be a solution of (\ref{main}) in $\mathbb B_{10}$. Assume that $\|u_\epsilon\|_{L^\infty(\mathbb B_{r})}\leq \delta$ for some small $r<\frac{\lambda}{9}$ and $\|u_\epsilon\|_{L^\infty(\mathbb B_{10})}\leq 1$. Then, with $\beta=\frac{\ln \frac{9}{4}} {\ln \frac{9}{\lambda}}$,
\begin{align}
\|u_\epsilon\|_{L^\infty(\mathbb B_{9})}\leq C^{\frac{1}{1-\beta}}\delta^{\beta^m}+ m C^{\frac{1}{1-\beta}}   [\epsilon \ln (\epsilon^{-1}+2)]^{\beta^m}
\label{progaga}
\end{align}
\label{cor3}
for some positive integer $m$ depending only on $r$, for $\epsilon \ln (\epsilon^{-1}+2)\leq 1. $
\end{corollary}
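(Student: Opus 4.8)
The plan is to deduce (\ref{progaga}) from the three-ball inequality (\ref{result2}) by iterating it along a chain of balls running from the origin out to radius $9$, using the normalization $\|u_\epsilon\|_{L^\infty(\mathbb B_{10})}\le 1$ to control the largest ball at every step. Write $E:=\epsilon\ln(\epsilon^{-1}+2)$, which is assumed $\le 1$, put $\rho:=r<\lambda/9$, and assume (enlarging if necessary) that the constant $C$ of (\ref{result2}) is $\ge 1$. By translation invariance of the class of equations (\ref{main}) — the shifted coefficients $y\mapsto A(y+x_0/\epsilon)$ still satisfy (i)--(iii) with the same constants — the inequality (\ref{result2}) holds, with the same $\beta=\ln(9/4)/\ln(9/\lambda)$ and the same $C$, with every ball re-centered at any point $x_0$ for which $\mathbb B_{9\rho/\lambda}(x_0)\subset\mathbb B_{10}$. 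For a fixed $x\in\mathbb B_9$ set $x_j:=\min(j\rho,|x|)\,x/|x|$, $j=0,1,\dots,m$, where $m:=\lceil 9/r\rceil$ depends only on $r$; then $x_0=0$, $x_m=x$, $|x_j-x_{j-1}|\le\rho$, and $|x_j|\le|x|<9$. Since $9\rho/\lambda<1$ we have $\mathbb B_{9\rho/\lambda}(x_j)\subset\mathbb B_{|x_j|+9\rho/\lambda}\subset\mathbb B_{10}$, so (\ref{result2}) is available at each $x_j$, and $\mathbb B_\rho(x_{j+1})\subset\mathbb B_{2\rho}(x_j)$ because $|x_{j+1}-x_j|\le\rho$. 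Writing $a_j:=\|u_\epsilon\|_{L^\infty(\mathbb B_\rho(x_j))}$ and bounding $\|u_\epsilon\|_{L^\infty(\mathbb B_{9\rho/\lambda}(x_j))}\le\|u_\epsilon\|_{L^\infty(\mathbb B_{10})}\le 1$ in (\ref{result2}) gives
\[
a_{j+1}\le\|u_\epsilon\|_{L^\infty(\mathbb B_{2\rho}(x_j))}\le C\bigl(a_j^\beta+E^\beta\bigr),\qquad a_0\le\|u_\epsilon\|_{L^\infty(\mathbb B_r)}\le\delta .
\]

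The second step is to unwind this recursion. Using the elementary subadditivity $(s+t)^\beta\le s^\beta+t^\beta$ for $s,t\ge 0$, $\beta\in(0,1)$, an induction on $j$ shows $a_j\le A_j\,\delta^{\beta^j}+B_j\,E^{\beta^j}$ with $A_j=C^{1+\beta+\dots+\beta^{j-1}}$ and $B_j=CB_{j-1}^\beta+C$ (and $A_0=1$, $B_0=0$). Since $1+\beta+\dots+\beta^{j-1}\le 1/(1-\beta)$ and $C\ge 1$ one gets $A_j\le C^{1/(1-\beta)}$, and unrolling $B_j$ gives $B_j\le\sum_{i=0}^{j-1}C^{1+\beta+\dots+\beta^i}\le j\,C^{1/(1-\beta)}$. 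Because $E\le 1$ and $\beta^i\ge\beta^j$ for $i\le j$ we have $E^{\beta^i}\le E^{\beta^j}$, so in fact $a_j\le C^{1/(1-\beta)}\delta^{\beta^j}+j\,C^{1/(1-\beta)}E^{\beta^j}$. Taking $j=m$ and using $x=x_m$, hence $|u_\epsilon(x)|\le a_m$ with a bound independent of $x$, we obtain
\[
\|u_\epsilon\|_{L^\infty(\mathbb B_9)}\le C^{1/(1-\beta)}\delta^{\beta^m}+m\,C^{1/(1-\beta)}\bigl[\epsilon\ln(\epsilon^{-1}+2)\bigr]^{\beta^m},
\]
which is (\ref{progaga}).

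The only real difficulty is the accumulation of the error terms through the $m$ iterations: each step simultaneously raises the accumulated bound to the power $\beta$ and injects a fresh $CE^\beta$, so a priori the errors could compound. The resolution is that the term injected at step $i$ emerges in $a_m$ as roughly $C^{1/(1-\beta)}E^{\beta^{\,m-i+1}}$, and under the hypothesis $\epsilon\ln(\epsilon^{-1}+2)\le 1$ every such term is dominated by the largest, $C^{1/(1-\beta)}E^{\beta^m}$; there are $m$ of them, which is exactly the origin of the factor $m$ in front of the error, while convergence of the geometric series $\sum\beta^i$ keeps the multiplicative constant at $C^{1/(1-\beta)}$. The assumption $\epsilon\ln(\epsilon^{-1}+2)\le 1$ is precisely what licenses replacing each $E^{\beta^i}$ by the larger $E^{\beta^m}$, and so is essential for the clean final form.
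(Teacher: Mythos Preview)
Your proof is correct and follows essentially the same approach as the paper: iterate (\ref{result2}) along a chain of overlapping balls from the origin out to a point of $\mathbb B_9$, bound the outer-ball norm by $1$ at each step, and control the accumulated error terms using $\epsilon\ln(\epsilon^{-1}+2)\le 1$. The only cosmetic differences are that the paper first fixes the supremum point $\bar x\in\mathbb B_9$ rather than working with an arbitrary $x$, and your recursion bookkeeping (the sequences $A_j,B_j$ and the explicit justification of translation invariance) is laid out more carefully than in the paper.
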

From the inequality (\ref{progaga}), we learn that $u_\epsilon$ in any compact domain of $\mathbb B_{10}$ can be made small
if $\delta$ is sufficiently small and $\epsilon$ is sufficiently small, with an explicit dependence of $\delta$ and $\epsilon$.

Recently, an interesting doubling inequality was shown by Lin and Shen \cite{LS} under a stronger regularity of $A(y)$. Assume that $A(y)$ satisfies the following Lipschitz continuous condition.\\
(iv) Lipschitz continuity: There exists a constant $\tau>0$ such that
\begin{align}
|A(x)-A(y)|\leq \tau |x-y| \quad \quad
\label{lips}
\end{align}
for any $x, y\in \mathbb R^d$.

Let $\fint_{\mathbb B_{r}}$ be the average integral in $\mathbb B_{r}$.  Under the assumptions of (\ref{ellip}), (\ref{perio}) and (\ref{lips}) for $A(y)$, the doubling inequality results in \cite{LS} can be formulated as

\textbf{Theorem A}: Let $u_\epsilon$ be a solution of (\ref{main}) in $\mathbb B_{{4}/{\sqrt{\lambda}}}$. Assume that
\begin{align}
\fint_{\mathbb B_{{2}/{\sqrt{\lambda}}}}u^2_\epsilon\, dx\leq N \fint_{\mathbb B_{\sqrt{\lambda}}}u^2_\epsilon\, dx
\end{align}
for some $N>1$. Then
\begin{align}
\fint_{\mathbb B_{r}}u^2_\epsilon\, dx\leq C(N) \fint_{\mathbb B_{\frac{r}{2}}}u^2_\epsilon\, dx
\end{align}
for any $0<r<1$, where $C(N)$ depends on $N, \lambda, \tau, d$.

Note that $C(N)$ can be chosen increasing with respect to $N$. By standard arguments, we can replace the average $L^2$ norm by  $L^\infty$ norm  in Theorem A. Thanks to Theorem {A}, we are able to show the following full propagation of smallness result.
\begin{corollary}
Assume that $\|u_\epsilon\|_{L^\infty(\mathbb B_{2/\sqrt{\lambda}})}\leq 1$. Then, given any constants $\eta_0>0$, $0<r_0<\frac{\sqrt{\lambda}}{2}$, there exists a positive constant $\delta_0=\delta_0(\lambda, d, \tau, r_0, \eta_0)$ such that if $\|u_\epsilon\|_{L^\infty(\mathbb B_{r_0})}\leq \delta_0$, then
$\|u_\epsilon\|_{L^\infty(\mathbb B_{\sqrt{\lambda}})}\leq \eta_0.$
\label{cornew}
\end{corollary}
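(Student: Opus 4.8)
The plan is to obtain Corollary~\ref{cornew} from the $L^\infty$ version of Theorem~A by a dichotomy argument followed by a finite iteration of the doubling inequality; this is the by now classical route from a doubling inequality to propagation of smallness, the one point to watch being the uniformity of the doubling constant in $\epsilon$ and in the particular solution. Observe first that the conclusion is automatic when $\eta_0\ge 1$, since then $\|u_\epsilon\|_{L^\infty(\mathbb B_{\sqrt\lambda})}\le\|u_\epsilon\|_{L^\infty(\mathbb B_{2/\sqrt\lambda})}\le 1\le\eta_0$, and it is automatic when $\|u_\epsilon\|_{L^\infty(\mathbb B_{\sqrt\lambda})}=0$. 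So we may assume $0<\eta_0<1$ and $\|u_\epsilon\|_{L^\infty(\mathbb B_{\sqrt\lambda})}>0$, and we set $N_0:=\eta_0^{-1}>1$.

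First I would distinguish two cases according to the size of the ratio $\|u_\epsilon\|_{L^\infty(\mathbb B_{2/\sqrt\lambda})}/\|u_\epsilon\|_{L^\infty(\mathbb B_{\sqrt\lambda})}$, which is always $\ge 1$ since $\mathbb B_{\sqrt\lambda}\subset\mathbb B_{2/\sqrt\lambda}$. If this ratio is $\ge N_0$, then $\|u_\epsilon\|_{L^\infty(\mathbb B_{\sqrt\lambda})}\le N_0^{-1}\|u_\epsilon\|_{L^\infty(\mathbb B_{2/\sqrt\lambda})}\le N_0^{-1}=\eta_0$, and we are done with no smallness hypothesis needed. If instead the ratio is $<N_0$, then the hypothesis of the $L^\infty$ version of Theorem~A holds with $N=N_0$; since the constant $C(N)$ there depends only on $N,\lambda,\tau,d$ and not on $\epsilon$, we obtain
\[
\|u_\epsilon\|_{L^\infty(\mathbb B_{r})}\le C(N_0)\,\|u_\epsilon\|_{L^\infty(\mathbb B_{r/2})}\qquad\text{for all }0<r<1,
\]
with a constant $C(N_0)=C(N_0,\lambda,\tau,d)$ that no longer depends on $u_\epsilon$.

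Next I would iterate this inequality downward starting from $r=\sqrt\lambda$, which is legitimate because $\lambda<1$ forces every radius $2^{-j}\sqrt\lambda$ into $(0,1)$: for each integer $m\ge 1$,
\[
\|u_\epsilon\|_{L^\infty(\mathbb B_{\sqrt\lambda})}\le C(N_0)^{m}\,\|u_\epsilon\|_{L^\infty(\mathbb B_{2^{-m}\sqrt\lambda})}.
\]
Since $r_0<\sqrt\lambda/2$, I would choose $m=m(r_0,\lambda):=\big\lceil\log_2(\sqrt\lambda/r_0)\big\rceil\ge 1$, so that $\mathbb B_{2^{-m}\sqrt\lambda}\subseteq\mathbb B_{r_0}$ and hence $\|u_\epsilon\|_{L^\infty(\mathbb B_{2^{-m}\sqrt\lambda})}\le\|u_\epsilon\|_{L^\infty(\mathbb B_{r_0})}\le\delta_0$. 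This gives $\|u_\epsilon\|_{L^\infty(\mathbb B_{\sqrt\lambda})}\le C(N_0)^{m}\delta_0$. It then suffices to set $\delta_0:=\eta_0\,C(N_0)^{-m}$, a quantity that depends only on $\lambda,d,\tau$ (through $C$), on $r_0$ and $\lambda$ (through $m$), and on $\eta_0$ (through $N_0$), and not on $\epsilon$; with this choice $\|u_\epsilon\|_{L^\infty(\mathbb B_{\sqrt\lambda})}\le\eta_0$ in the second case as well, which completes the argument.

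I expect the only genuinely nontrivial ingredient to be the one already announced in the text, namely the passage from the $L^2$ doubling inequality of Theorem~A (stated in terms of averaged $L^2$ norms) to its $L^\infty$ form. This rests on the interior $L^\infty$ (Moser) estimates for $\mathcal{L}_\epsilon$ combined with a few applications of the $L^2$ doubling to move between nearby scales, and its crucial feature is that all the constants involved are independent of $\epsilon$. Everything that follows --- the dichotomy and the iteration --- is elementary bookkeeping; the only thing one must not lose track of is the $\epsilon$-uniformity of $C(N_0)$, which is precisely what renders the threshold $\delta_0$ independent of $\epsilon$.
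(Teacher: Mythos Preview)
Your argument is correct and follows essentially the same route as the paper: iterate the $L^\infty$ doubling inequality of Theorem~A a number of times depending only on $r_0$ and $\lambda$, with the doubling constant controlled via $N_0=\eta_0^{-1}$. The paper packages the same idea as a contradiction argument (assume $M_{k_0}>\eta_0$, then $N\le 1/M_{k_0}<1/\eta_0$ and use monotonicity of $C(N)$), which is exactly your dichotomy rewritten; the choice of $\delta_0$ and the final bound coincide up to an irrelevant factor of $2$.
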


Note that the dependence of $\delta_0$ is not explicit in the Corollary.
\begin{remark}
A direct consequence of Theorem A is the following doubling inequality with translation (Theorem 3.3 in \cite{LS}):
\begin{align}
\fint_{\mathbb B_{r}(x_0)}u^2_\epsilon\, dx\leq C(N) \fint_{\mathbb B_{\frac{r}{2}}(x_0)}u^2_\epsilon\, dx
\label{vanish}
\end{align}
for $0<r<\frac{3}{4}$ and $|x_0|\leq \frac{\sqrt{\lambda}}{2}$. We can also replace the average $L^2$ norm by $L^\infty$ norm in (\ref{vanish}).
Assume that $\|u_\epsilon\|_{L^\infty(\mathbb B_{\sqrt{\lambda}})}\geq 1$ and $\|u_\epsilon\|_{L^\infty(\mathbb B_{2/\sqrt{\lambda}})}\leq M$, a  uniform in $\epsilon$ vanishing order estimate for $u_\epsilon$ can be shown  in $\mathbb B_{\sqrt{\lambda}/2}$ by iteration of  (\ref{vanish}) in $L^\infty$ norm. However, such a vanishing order estimate is implicit since it depends on $C(M)$. See e.g. \cite{K}, \cite{Z}, for some literature on the explicit vanishing order estimates for solutions in (\ref{second}).
\end{remark}

We can extend the three-ball inequality (\ref{result}) to elliptic homogenization with Dirichlet or Neumann boundary conditions. We consider elliptic homogenization with the Dirichlet or Neumann boundary conditions in half balls
\begin{align}
\left \{ \begin{array}{rlr}
-{\rm div}\big(A(\frac{x}{\epsilon})\nabla u_\epsilon\big)=0 \quad \quad &\mbox{in} \ \mathbb B_{10}^+=\mathbb B_{10}\cap \{x_d\geq 0\},  \medskip \\
u_\epsilon=0 \ \mbox{or} \ \frac{\partial u_\epsilon}{\partial \nu}=0 \quad \quad &\mbox{on} \ \partial\mathbb B_{10}^+\cap \{x_d= 0\},
\end{array}
\right.
\label{mainhalf}
\end{align}
where
\begin{align}
A(y)=
\begin{pmatrix}
a_{11}(y) &\cdots & a_{1(d-1)}(y) & 0 \\
 \vdots  & \ddots  & \vdots & \vdots  \\
a_{(d-1)1}(y) &\cdots & a_{(d-1)(d-1)}(y) & 0 \\
0 & \cdots & 0 & a_{dd}(y)
\end{pmatrix},
\label{matrix}
\end{align}
 the conormal derivative $\frac{\partial u_\epsilon}{\partial \nu_\epsilon}$ is defined as
\begin{align*}
\frac{\partial u_\epsilon}{\partial \nu_\epsilon}=a_{ij}\frac{\partial u_\epsilon}{\partial x_j}n_i
\end{align*}
and $n=(n_1, \cdots, n_d)$ is a unit outer normal.
We are able to show the approximate three-ball inequality for solutions of (\ref{mainhalf}) in half balls.
\begin{theorem}
Let $u_\epsilon$ be a solution of (\ref{mainhalf}) with $A(y)$ satisfying (\ref{matrix}) in $\mathbb B^+_{10}$. For $0<R_1<R_2<\frac{\lambda R_3}{4}<\frac{\lambda}{4}$,
\begin{align}
\|u_\epsilon\|_{L^\infty(\mathbb B^+_{R_2})}\leq C\big\{\frac{ R_2}{R_3} \|u_\epsilon\|^\beta_{L^\infty(\mathbb B^+_{R_1})} \|u_\epsilon\|^{1-\beta}_{L^\infty(\mathbb B^+_{R_3})} +\frac{R^2_3}{R^2_1} [\epsilon \ln (\epsilon^{-1}+2)]^\beta \|u_\epsilon\|_{L^\infty(\mathbb B^+_{R_3})}\big\},
\label{halfspace}
\end{align}
where $\beta=\frac{\ln \frac{\lambda R_3}{2R_2}} {\ln \frac{R_3}{R_1} }$ and $C$ depends on $\lambda$ and $(\tau, \mu)$.
\end{theorem}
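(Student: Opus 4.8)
The plan is to deduce the half-ball inequality (\ref{halfspace}) from Corollary 1 (hence from Theorem 1) by reflecting the solution across the flat part of the boundary, the role of the special structure (\ref{matrix}) being precisely to make this reflection legitimate. Given a solution $u_\epsilon$ of (\ref{mainhalf}), extend it to $\mathbb B_{10}$ by the odd reflection $\widetilde u_\epsilon(x',x_d)=-u_\epsilon(x',-x_d)$ for $x_d<0$ in the Dirichlet case, and by the even reflection $\widetilde u_\epsilon(x',x_d)=u_\epsilon(x',-x_d)$ for $x_d<0$ in the Neumann case; in both cases extend the coefficient by $\widetilde A(y)=A(y',-y_d)$ for $y_d<0$. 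Because $A$ has the block form (\ref{matrix}), with no coupling between the tangential variables and $x_d$, a standard computation using (\ref{matrix}) together with the homogeneous Dirichlet, respectively Neumann, condition in (\ref{mainhalf}) shows that $\widetilde u_\epsilon\in H^1(\mathbb B_{10})$ is a weak solution of ${\rm div}\big(\widetilde A(x/\epsilon)\nabla\widetilde u_\epsilon\big)=0$ in $\mathbb B_{10}$. Moreover $|\widetilde u_\epsilon(x',x_d)|=|u_\epsilon(x',|x_d|)|$, so $\|\widetilde u_\epsilon\|_{L^\infty(\mathbb B_\rho)}=\|u_\epsilon\|_{L^\infty(\mathbb B_\rho^+)}$ for every $\rho\in(0,10)$, and likewise on the ellipsoids $E_\rho$ of Section 2, which are symmetric in $x_d$ since they are built from $A$ and hence from the homogenized matrix, which inherits the block form (\ref{matrix}).

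Next one checks that $\widetilde A$ still fits the framework in which Theorem 1 is proved. Ellipticity with the same $\lambda$ and H\"older continuity with the same $(\tau,\mu)$ are immediate, reflection being an isometry of $\mathbb R^d$. Periodicity is the delicate point: $\widetilde A$ remains $\mathbb Z^{d-1}$-periodic tangentially and periodic in $y_d$ on each of $\{y_d>0\}$, $\{y_d<0\}$, but it is not globally $\mathbb Z^d$-periodic, carrying instead a one-cell-wide seam along $\{y_d=0\}$. Here again the block structure is essential: it guarantees that $\widetilde A$ defines a genuine divergence-form operator with no single-layer term supported on $\{y_d=0\}$, so that $\widetilde A(x/\epsilon)$ still homogenizes, the homogenized matrix being the same constant $\widehat A$ as for $\mathcal L_\epsilon$, and the seam, confined to an $O(\epsilon)$-neighborhood of $\{x_d=0\}$, affects neither the corrector bounds nor the $O\big(\epsilon\ln(\epsilon^{-1}+2)\big)$ rate of approximation of the Poisson kernel of $\mathcal L_\epsilon$ by the homogenized one that underlies Theorem 1. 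Granting this, the proof of Theorem 1 applies to $\widetilde u_\epsilon$ on $\mathbb B_{10}$ line by line: the Poisson-kernel representation on $E_R$, its replacement by the constant-coefficient homogenized Poisson kernel up to an $\epsilon\ln(\epsilon^{-1}+2)$ error, and the Lagrange interpolation estimate. This produces the three-ball inequality of Theorem 1 for $\widetilde u_\epsilon$, and the ellipsoid-to-ball comparison of Corollary 1 then yields its ball form with the same exponent $\beta=\frac{\ln\frac{\lambda R_3}{2R_2}}{\ln\frac{R_3}{R_1}}$ and the same $C=C(\lambda,\tau,\mu)$.

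Translating back through $\|\widetilde u_\epsilon\|_{L^\infty(\mathbb B_\rho)}=\|u_\epsilon\|_{L^\infty(\mathbb B_\rho^+)}$ for $\rho\in\{R_1,R_2,R_3\}$ then gives exactly (\ref{halfspace}). The step I expect to require the most care is the middle one: showing that the reflected coefficient $\widetilde A$, although no longer globally periodic, still inherits the quantitative homogenization estimates --- in particular the $\epsilon\ln(\epsilon^{-1}+2)$ Poisson-kernel rate --- on which the proof of Theorem 1 rests. This is exactly where the hypothesis (\ref{matrix}) is used in an essential way, and where one relies on the boundary (half-space) version of periodic homogenization rather than the purely interior one. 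An alternative that avoids reflection would be to rerun the Poisson-kernel-plus-interpolation scheme of Theorem 1 directly in $\mathbb B_{10}^+$, using the Poisson kernel of $\mathcal L_\epsilon$ for the mixed problem in the half-ball; the reflection route is preferable since it keeps everything within the interior theory already developed.
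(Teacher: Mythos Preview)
Your overall strategy---even extension of the coefficient across $\{x_d=0\}$, odd (Dirichlet) or even (Neumann) reflection of the solution, verification that the reflected function is a weak solution in the full ball, and then a direct appeal to Corollary~1---is exactly the route the paper takes. The paper also writes out the weak-solution check in detail using the block structure (\ref{matrix}), just as you sketch, and then reads off (\ref{halfspace}) from the full-ball inequality via $\|\tilde u_\epsilon\|_{L^\infty(\mathbb B_\rho)}=\|u_\epsilon\|_{L^\infty(\mathbb B_\rho^+)}$.

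The one substantive difference is your handling of periodicity. The paper simply asserts that the evenly reflected coefficient $\tilde A(y)=A(y',|y_d|)$ ``still satisfies the $1$-periodic condition (\ref{perio})'' and invokes Corollary~1 with no further comment. You instead flag that $\tilde A$ is not globally $\mathbb Z^d$-periodic and propose to argue that the $O(\epsilon)$ seam along $\{y_d=0\}$ does not disturb the Poisson-kernel estimates underlying Theorem~1. Your caution is well placed: $A(y',|y_d|)$ is $1$-periodic in $y_d$ only if $A(y',y_d)=A(y',-y_d)$, which the block form (\ref{matrix}) by itself does not force. The paper does not address this; it asserts periodicity and applies Corollary~1 verbatim. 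So you have located precisely the step that deserves justification---but your own resolution (the seam ``affects neither the corrector bounds nor the Poisson-kernel rate'') is likewise asserted rather than proved. In that sense your proposal and the paper's proof share the same soft spot; you are simply more candid about where it lies.
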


Similarly, we are able to consider an approximate three-ball inequality for eigenvalue type equations in half balls with the Dirichlet or Neumann boundary conditions
\begin{align}
\left \{ \begin{array}{rlr}
-{\rm div}\big(A(\frac{x}{\epsilon})\nabla u_\epsilon\big)=\lambda_{\epsilon, k} u_\epsilon \quad \quad &\mbox{in} \ \mathbb B_{10}^+=\mathbb B_{10}\cap \{x_d\geq 0\},  \medskip \\
u_\epsilon=0 \ \mbox{or} \ \frac{\partial u_\epsilon}{\partial \nu}=0 \quad \quad &\mbox{on} \ \partial\mathbb B_{10}^+\cap \{x_d= 0\},
\end{array}
\right.
\label{eigenhalf}
\end{align}
where $A(y)$ is in the form of (\ref{matrix}). The constants $\lambda_{\epsilon, k}$ are assumed to be positive and $\lambda_{\epsilon, k}\to\infty$ as $k\to \infty$. We can show the following corollary.

\begin{corollary}
Let $u_\epsilon$ be a solution of (\ref{eigenhalf}) with $A(y)$ satisfying (\ref{matrix}) in $\mathbb B^+_{10}$. For $0<R_1<R_2<\frac{\lambda R_3}{4}<\frac{\lambda}{4}$,
\begin{align}
\|u_\epsilon\|_{L^\infty(\mathbb B^+_{R_2})}&\leq C e^{2R_3 \sqrt{\lambda_{\epsilon, k}} }\big\{\frac{ R_2}{R_3} \|u_\epsilon\|^\beta_{L^\infty(\mathbb B^+_{R_1})} \|u_\epsilon\|^{1-\beta}_{L^\infty(\mathbb B^+_{R_3})} \nonumber \\ &+\frac{R^2_3}{R^2_1} [\epsilon \ln (\epsilon^{-1}+2)]^\beta \|u_\epsilon\|_{L^\infty(\mathbb B^+_{R_3})}\big\},
\label{lastcon}
\end{align}
where $\beta=\frac{\ln \frac{\lambda R_3}{2R_2}} {\ln \frac{R_3}{R_1} }$ and $C$ depends on $\lambda$ and $(\tau, \mu)$.
\label{corhalf}
\end{corollary}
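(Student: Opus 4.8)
The plan is to reduce Corollary \ref{corhalf} to Theorem 2 by absorbing the eigenvalue into an exponential correction, exactly in the spirit of how Corollary \ref{cor2} follows from Corollary 1. First I would observe that a solution $u_\epsilon$ of (\ref{eigenhalf}) can be lifted to a solution of a genuinely homogeneous equation by adding one auxiliary variable: set $v_\epsilon(x,t) = u_\epsilon(x)\cosh(\sqrt{\lambda_{\epsilon,k}}\,t)$ for $(x,t)\in\mathbb R^{d+1}$, so that $-\di(\widetilde A(\frac{x}{\epsilon})\nabla_{x,t}v_\epsilon) = 0$, where $\widetilde A$ is the block matrix $\mathrm{diag}(A(y),1)$; one checks that $\widetilde A$ still satisfies (\ref{ellip}), (\ref{perio}), (\ref{holder}), and crucially retains the structural form (\ref{matrix}) with respect to the original variable $x_d$ (the new $t$-variable sits in a separate block with constant coefficient $1$, which is harmless). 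The Dirichlet or Neumann condition on $\{x_d=0\}$ for $u_\epsilon$ passes to $v_\epsilon$ on $\{x_d=0\}$ since the $\cosh$ factor only involves $t$. One must be slightly careful that Theorem 2's half-ball geometry is with respect to $x_d$, not the added variable, so the lift is applied "transversally" and the half-ball structure is preserved.

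Next I would apply Theorem 2 to $v_\epsilon$ on the $(d+1)$-dimensional half-ball, which requires the radii to be chosen so that $\mathbb B_{R_3}^+\times\{0\}$ sits inside the lifted domain — since $\cosh(0)=1$, the slice $t=0$ recovers $u_\epsilon$ exactly, giving
\begin{align*}
\|u_\epsilon\|_{L^\infty(\mathbb B^+_{R_2})} = \|v_\epsilon\|_{L^\infty(\mathbb B^+_{R_2}\times\{0\})} \leq \|v_\epsilon\|_{L^\infty(\widetilde{\mathbb B}^+_{R_2})},
\end{align*}
and from the other side, on $\widetilde{\mathbb B}^+_{R_j}$ one has $\|v_\epsilon\|_{L^\infty} \leq \cosh(R_j\sqrt{\lambda_{\epsilon,k}})\,\|u_\epsilon\|_{L^\infty(\mathbb B^+_{R_j})} \leq e^{R_3\sqrt{\lambda_{\epsilon,k}}}\|u_\epsilon\|_{L^\infty(\mathbb B^+_{R_j})}$ for $j=1,3$. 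For the lower bound on the small ball, I would use $v_\epsilon \geq u_\epsilon$ on the slice trivially, but more to the point, the term $\|v_\epsilon\|_{L^\infty(\widetilde{\mathbb B}^+_{R_1})}^\beta$ should be bounded above by $(e^{R_3\sqrt{\lambda_{\epsilon,k}}}\|u_\epsilon\|_{L^\infty(\mathbb B^+_{R_1})})^\beta$, and the factor $e^{\beta R_3\sqrt{\lambda_{\epsilon,k}}}\cdot e^{(1-\beta)R_3\sqrt{\lambda_{\epsilon,k}}} = e^{R_3\sqrt{\lambda_{\epsilon,k}}}$ combines in the first term; a similar bookkeeping in the second term gives the factor $e^{2R_3\sqrt{\lambda_{\epsilon,k}}}$ overall (one $e^{R_3\sqrt{\lambda_{\epsilon,k}}}$ from moving $v$ back to $u$ on the left, and one from the right-hand side norms). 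Collecting, the constant $C$ is unchanged (it depends only on $\lambda$ and $(\tau,\mu)$, which the block matrix inherits), and $\beta$ is the same since the radii are the same.

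The main obstacle, and the only genuinely technical point, is verifying that Theorem 2 genuinely applies to the lifted coefficient $\widetilde A$ in the form required: namely that the block structure (\ref{matrix}) — the vanishing of the cross terms $a_{id}$, $a_{di}$ for $i<d$ — is what is actually used in the proof of Theorem 2 (presumably to permit even/odd reflection across $\{x_d=0\}$), and that appending a decoupled $(d+1)$-th coordinate with constant coefficient does not interfere with that reflection argument. I would check that the reflected extension of $v_\epsilon$ across $\{x_d=0\}$ solves the same homogeneous equation with the reflected coefficient, so that the interior three-ball inequality (Theorem 1's ellipsoid version, or its ball corollary) can be invoked; since the $t$-block is constant it reflects trivially. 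A secondary, purely cosmetic issue is matching the ellipsoid $E_r$ of Theorem 1 to the half-ball $\mathbb B_r^+$ of Theorem 2 in the extra dimension, but this is already handled inside the proof of Theorem 2 and requires no new input here. Everything else is the same substitution-and-exponential-estimate bookkeeping that produces Corollary \ref{cor2} from Corollary 1, so I would present Corollary \ref{corhalf} as ``the proof is identical to that of Corollary \ref{cor2}, using Theorem 2 in place of Corollary 1.''
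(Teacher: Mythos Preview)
Your proof is correct and follows essentially the same strategy as the paper: lift $u_\epsilon$ by an exponential-type factor in an auxiliary variable $t$ to eliminate the eigenvalue term, verify that the enlarged coefficient matrix still has the block structure (\ref{matrix}) needed for reflection across $\{x_d=0\}$, and then apply the half-ball three-ball inequality. The only cosmetic differences are that the paper uses $v_\epsilon(x,t)=e^{\sqrt{\lambda_{\epsilon,k}}\,t}u_\epsilon(x)$ rather than your $\cosh(\sqrt{\lambda_{\epsilon,k}}\,t)$ factor, and rather than invoking Theorem 2 as a black box it redoes the odd/even reflection explicitly and applies Corollary 1 to the extended function --- your packaging is slightly more economical but the substance is identical.
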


The paper is organized as follows. Section 2 is devoted to some basic material on elliptic periodic homogenization. In section 3, we prove the Theorem 1, Theorem 2 and the Corollaries. The letter $C$ denotes a positive
constant that does not depend on $\epsilon$ or $u_\epsilon$. It may vary from line to line.

\section{Preliminaries}
This section introduces some background results on elliptic periodic homogenization. Most of the material can be found in books, e.g. \cite{BLP}, \cite{S}.
Let $ \chi(y)=(\chi_1(y), \cdots, \chi_d(y))\in H^1(\mathbb T^d; \mathbb R^d)$ denote the corrector of $\mathcal{L}_\epsilon$, where $\mathbb T^d=\mathbb R^d/ \mathbb Z^d$. It is known that $\chi(y)$ is the unique 1-periodic solution in $H^1(\mathbb T^d)$ satisfying
\begin{equation}
\left \{\begin{array}{lll}
\mathcal{L}_1(\chi_j)+\mathcal{L}_1(y_j)=0\quad \quad \mbox{in} \ \mathbb R^d,\medskip \\
\int_{\mathbb T^n  }\chi_j\, dy=0.
\end{array}
\right.
\end{equation}
Using the De Giorgi-Nash estimates, $\chi_j(y)$ is H\"older continuous. If $A(y)$ is H\"older continuous, then $\nabla \chi$ is bounded. See e.g. \cite{S}. Assume that $\Omega$ is  a bounded $C^{2, \eta}$ domain for some $\eta\in (0, 1)$ in $\mathbb R^d$ in the rest of the paper.
If $\{u_\epsilon\}$ is bounded in $H^1(\Omega)$, then any sequence $\{u_{\epsilon_l}\}$ with $\epsilon_l\to 0$ contains a subsequence that converges weakly in $H^1(\Omega)$. By the homogenization theory, one may conclude that as $\epsilon\to 0$,
\begin{equation}
\left\{ \begin{array}{lll}
A(\frac{x}{\epsilon})\nabla u_\epsilon\rightharpoonup \widehat{A}\nabla u \quad \quad & \mbox{weakly in } \  L^2(\Omega), \medskip\\
u_\epsilon\rightharpoonup u & \mbox{weakly in } \  L^2(\Omega), \medskip\\
u_\epsilon\to u &\mbox{stronlgy in }\ L^2(\Omega),
\end{array}
\right.
\end{equation}
where $\widehat{A}=(\widehat{a}_{ij})$ and
\begin{align}
\widehat{A}=\widehat{a}_{ij}=\int_{\mathbb T^n}\big[ a_{ij}+a_{ik}\frac{\partial{\chi_j}}{\partial y_k}\big]\,dy.
\end{align}
It is also true that the constant matrix $\widehat{a}_{ij}$ is symmetric and satisfies (\ref{ellip}) with the same parameter $\lambda$.
Thus,
the homogenized equation for $\mathcal{L}_\epsilon u_\epsilon=0$ is given by
\begin{align}\mathcal{L}_0 u=- {\rm div}(\widehat{A}\nabla u )=0.
\label{homoge}
\end{align}

Since $\widehat{A}$ is symmetric and positive definite, there exists a $d\times d$ matrix $S$ such that $S\widehat{A} S^T=I_{d\times d}$. Note that $\widehat{A}^{-1}=S^T S$ and
\begin{align}
\langle \widehat{A}^{-1}x, x\rangle= |Sx|^2.
\end{align}
We introduce a family of ellipsoids as
\begin{align}
E_r(\widehat{A})=\{ x\in \mathbb R^n: \langle (\widehat{A}^{-1} x, \ x \rangle\leq r^2\}.
\end{align}
By the assumption that $\widehat{A}$ satisfies (\ref{ellip}) as well, we can show that
\begin{align}
\mathbb B_{\sqrt{\lambda} r}(0)\subset E_r(\widehat{A}) \subset B_{\frac{r}{\sqrt{\lambda}}}(0).
\label{ellba}
\end{align}
We will write $E_r(\widehat{A})$ as $E_r$ if the context is understood.

The Dirichlet corrector is used to control the influence of the boundary data. The Dirichlet corrector $\Phi_{\epsilon, k}$ for the operator $\mathcal{L}_\epsilon$ in $\Omega$ is defined as
\begin{align}
\left \{ \begin{array}{ccl}
\mathcal{L}_\epsilon (\Psi_{\epsilon, k})&=0 \quad &\mbox{in} \ \Omega, \medskip \nonumber \\
\Psi_{\epsilon, k}&=P_k\quad &\mbox{on} \ \partial\Omega,
\end{array}
\right.
\end{align}
where $P_k= x_k$ for $1\leq k\leq d$. It was shown by Avellaneda and Lin in \cite{AL} that
\begin{align}
\| \nabla \Psi_{\epsilon, k}\|_{L^\infty(\Omega)}\leq C,
\label{resca}
\end{align}
where $C$ depends on $\lambda$, $(\tau, \mu)$ and $\Omega$. In the discussion in the next section, we need to know how the constant $C$ depends on $r$ in the ellipsoid $E_r$. By a rescaling argument, we can actually show that
\begin{align}
\| \nabla \Psi_{\epsilon, k}\|_{L^\infty(E_r)}\leq C
\label{corre}
\end{align}
with $C$ depends only on $\lambda$ and $(\tau, \mu)$.
For $x\in E_r$, let $y=\frac{x}{r}$. Then $y\in E_1$.  We perform a rescaling as
\begin{align}
\Psi_{\epsilon, k}(x)= r\tilde{\Psi}_{\epsilon, k}(\frac{x}{r})=r\tilde{\Psi}_{\epsilon, k}(y),
\end{align}
where $\tilde{\Psi}_{\epsilon, k}$ is the Dirichlet corrector for  the operator $\mathcal{L}_\epsilon$ in $E_1$. We can see that $\Psi_{\epsilon, k}(x)$ is the Dirichlet corrector for  the operator $\mathcal{L}_\epsilon$ in $E_r$, since
\begin{align}
\left \{ \begin{array}{rlr}
-{\rm div}( A(\frac{x}{\epsilon r})\nabla {\Psi}_{\epsilon, k}(x)=-{\rm div}( A(y)\nabla \tilde{\Psi}_{\epsilon, k}(y)=0 \quad &\mbox{in} \  E_r, \nonumber \medskip\\
{\Psi}_{\epsilon, k}=r\tilde{\Psi}_{\epsilon, k}(\frac{x}{r})= r\cdot \frac{x_k}{r}=x_k \quad &\mbox{on} \ \partial E_r.
\end{array}
\right.
\end{align}
It follows from (\ref{resca})  that
\begin{align*}
\| \nabla_y \tilde{\Psi}\|_{L^\infty(E_1)}\leq C
\end{align*}
with $C$ depending only on $\lambda$ and $(\tau, \mu)$.
Thus, it implies that
\begin{align}
\|\nabla_x {\Psi}_{\epsilon, k}\|_{L^\infty(E_r)}=\| \nabla_y \tilde{\Psi}\|_{L^\infty(E_1)}\leq C
\label{phi}
\end{align}
with the same constant $C$ in the last inequality. Therefore, we arrive at the estimates (\ref{corre}).

The asymptotic expansion of the Poisson kernel is well studied in homogenization theory. Let $P_\epsilon(x, \ y)$ denote the Poisson kernel for $\mathcal{L}_\epsilon$ in a bounded $C^{2, \eta}$ domain $\Omega$. It was shown by Kenig, Lin and Shen in \cite{KLS1} that we can write
\begin{align}
P_\epsilon(x, \ y)=P_0(x, \ y) w_\epsilon(y)+R_\epsilon(x,\ y),
\label{rewri}
\end{align}
where $P_0(x, \ y)$ is the Poisson kernel for the homogenized operator $\mathcal{L}_0$,
\begin{align}
w_\epsilon(y)= (\widehat{a}_{ij}n_i n_j )^{-1} \frac{\partial{\Psi_{\epsilon, k}}}{\partial n } n_k \cdot a_{ij}(\frac{y}{\epsilon}) n_i n_j,
\label{www}
\end{align}
$n=(n_1, \cdots, n_n)$ is a unit outer normal on $\partial \Omega$, and
\begin{align}
|R_\epsilon(x, y)|\leq \frac{ C\epsilon \ln |\epsilon^{-1}|x-y|+2|}{|x-y|^d} \quad \mbox{for any}\ x\in \Omega \ \mbox{and} \ y\in \partial\Omega.
\label{const}
\end{align}
The constant $C$ in (\ref{const}) depends on $\lambda$, $(\tau, \mu)$, and $\Omega$. From (\ref{ellip}), (\ref{phi}) and (\ref{www}), it follows that
\begin{align}
\|w_\epsilon(y)\|_{L^\infty(E_r)}\leq C
\label{www1}
\end{align}
with $C$ independent of $r$.
In the following discussions, we also need to know what the dependence of the constant $C$ in (\ref{const}) on the ellipsoid $E_r$ is. Note that for any $ x\in \Omega$ and $y\in \partial\Omega$,
\begin{align}
P_{\epsilon}(x, \ y)=-\frac{\partial G_\epsilon(x,\ y)}{\partial n(y)}  a_{ij}(\frac{y}{\epsilon}) n_i(y) n_j(y),
\end{align}
and
\begin{align}
P_0(x, \ y)=-\frac{\partial G(x,\ y)}{\partial n(y)}  \widehat{a}_{ij}(y) n_i(y) n_j(y),
\end{align}
where $G_\epsilon(x,\ y)$ and $G(x,\ y)$ are the Green functions for the operator $\mathcal{L}_\epsilon$ and $\mathcal{L}_0$, respectively.
 It follows from (\ref{rewri}) that
\begin{align}
R_\epsilon(x, \ y)&=-\frac{\partial G_\epsilon(x,\ y)}{\partial n(y)}  a_{ij}(\frac{y}{\epsilon}) n_i(y) n_j(y) \nonumber \\&+ \frac{\partial G(x,\ y)}{\partial n(y)}\frac{\partial \Psi_{\epsilon, k}}{\partial n_k(y)}n_k(y)  a_{ij}(\frac{y}{\epsilon}) n_i(y) n_j(y).
\end{align}
Thus, the estimate (\ref{const}) for $R_\epsilon(x, \ y)$ is shown in \cite{KLS1} as
\begin{align}
|R_\epsilon(x, \ y)|&\leq C | \frac{\partial G_\epsilon(x,\ y)}{\partial y_i}-\frac{\partial \Psi_{\epsilon ,k}}{\partial y_i} \  \frac{\partial G(x, y)}{\partial y_j} | \nonumber \\
&\leq C \frac{ \epsilon \ln |\epsilon^{-1}|x-y|+2|}{|x-y|^d}.
\label{howc}
\end{align}
To see how the $C$ in the last inequality depends on $r$ in $E_r$, we use the rescaling argument again. Let $x'=\frac{x}{r}$ and $y'=\frac{y}{r}$, where $x, y \in E_r$. Then
$x', y' \in E_1$.
 We do a rescaling as
\begin{align}
G_\epsilon (x, \ y)= r^{2-d} \tilde{G}_\epsilon(x', \ y'),
\end{align}
where  $\tilde{G}_\epsilon(x', \ y')$ is the Green function for the operator $\mathcal{L}_\epsilon$ in $E_1$.
Since
\begin{align}
\left \{ \begin{array}{rll}
-{\rm div}( a_{ij}(\frac{x}{\epsilon r})\nabla G_{\epsilon}(x, \ y) )=-r^{-d}{\rm div}( a_{ij}(\frac{x'}{\epsilon})\nabla \tilde{G}_{\epsilon}(y'))= r^{-d} \delta(x'-y')=\delta(x-y) \quad &\mbox{in} \  E_r, \nonumber \medskip\\
G_{\epsilon}(x, \ y)=r^{2-d} \tilde{G}_{\epsilon}(x', \ y')=0 \quad &\mbox{on} \ \partial E_r,
\end{array}
\right.
\end{align}
 it implies that $G_\epsilon (x, \ y)$ is the Green function for the operator $\mathcal{L}_\epsilon$ in $E_r$.
From (\ref{howc}), we have
\begin{align}
| \frac{\partial {G}_\epsilon(x,\ y)}{\partial y_i}-\frac{\partial \Psi_{\epsilon ,k}}{\partial y_i} \  \frac{\partial G(x, y)}{\partial y_j} |&= r^{1-d} | \frac{\partial G_\epsilon(x',\ y')}{\partial y'_i}-\frac{\partial \Psi_{\epsilon ,k}}{\partial y'_i} \  \frac{\partial G(x', y')}{\partial y'_j} | \nonumber \\
&\leq \frac{ C r^{1-d} \epsilon \ln [\epsilon^{-1}|x'-y'|+2 ] }{|x'-y'|^d} \nonumber \\
&=\frac{ C r \epsilon \ln [\epsilon^{-1}\frac{|x-y|}{r}+2 ] }{|x-y|^d},
\end{align}
where $C$ in the last inequality depends only on $\lambda, (\tau, \mu)$ since $x', y'\in E_1$. This implies that
\begin{align}
|R_\epsilon(x, \ y)|\leq \frac{ C r \epsilon \ln [\epsilon^{-1}\frac{|x-y|}{r}+2 ] }{|x-y|^d}
\label{RRR}
\end{align}
for $x\in E_r$ and $y\in \partial E_r$ and $C$ is independent of $r$.

\section{Approximate three-ball inequality }
This section is devoted to the proof of three-ball inequality (\ref{result}) in Theorem 1. We adapt the method of using the Poisson kernel in \cite{GM}, for harmonic functions, to elliptic homogenization. We want to have an explicit form for the Poisson kernel of the homogenized operator $\mathcal{L}_0$. We transform the operator $\mathcal{L}_0$ into the classical Laplacian operator.
Let $u(x)=w(Sx)$. Then
\begin{align}
0={\rm div}(\widehat{A}\nabla u)={\rm div} (S\widehat{A} S^T \nabla w)=\triangle w.
\end{align}
We can represent the harmonic function $w(x)$ by Poisson kernel as
\begin{align}
w(x)=\int_{S_r} \gamma_d \frac{ r^2-|x|^2}{r|x-y|^d}w(y)\,dy,
\label{harmo}
\end{align}
where $S_r$ is the $d-1$ dimensional sphere centered at the origin with radius $r$ and $\gamma_d=2\pi^{\frac{d}{2}}/\Gamma(\frac{d}{2})$ is the surface area of $S_1$ with $d\geq 2$. By the relation of $u(x)=w(Sx)$, we can transform (\ref{harmo}) to the solution $u$ for homogenized equation (\ref{homoge}). It follows that
\begin{align}
u(x)=\int_{\partial E_r} P_0(x, y) u(y)\,dy,
\end{align}
where
\begin{align}
P_0(x, y)=\gamma_d |S| \frac{ r^2-|Sx|^2}{ r|Sx-Sy|^d}.
\label{poiss}
\end{align}

Following \cite{GM}, we are going to apply the Lagrange interpolation method to obtain the three-ball inequality (\ref{result}).
Let us briefly review the standard Lagrange interpolation method in numerical analysis \cite{DB}.
Set
\begin{align}
\Phi_m(z)=(z-u_1)(z-u_2)\cdots (z-u_m)
\label{defin}
\end{align}
for $z, u_j\in \mathcal{C}$ with $j=1, \cdots, m$. Let $\mathcal{D}$ be a simply connected open domain in the complex plane $\mathcal{C}$ that contains the nodes $\tilde{u}, u_1, \cdots, u_m$. Assume that $f$ is an analytic function without poles in the closure of $\mathcal{D}$. By well-known calculations, it holds that
\begin{align}
\frac{1}{z-\tilde{u}}=\sum^m_{j=1} \frac{\Phi_{j-1}(\tilde{u})}{\Phi_j(z)}+\frac{\Phi_m(\tilde{u})}{(z-\tilde{u})\Phi_m(z)}.
\end{align}
Multiplying the last identity by $\frac{1}{2\pi i} f(z)$ and integrating along the boundary of $\mathcal{D}$ leads to
\begin{align}
\frac{1}{2\pi i}\int_{\partial \mathcal{D}}\frac{f(z)}{z-\tilde{u}}\, dz= \sum^m_{j=1} \frac{\Phi_{j-1}(\tilde{u})}{2\pi i}\int_{\partial \mathcal{D}} \frac{f(z)}{\Phi_j(z)}\, dz+(R_m f)(\tilde{u}),
\end{align}
where
\begin{align*}
(R_m f)({\tilde{u}})= \frac{1}{2\pi i}\int_{\partial \mathcal{D}}\frac{\Phi_m(\tilde{u})f(z) }{(z-\tilde{u})\Phi_m(z)}\, dz.
\end{align*}
By the residue theorem, we obtain that
\begin{align}
(R_m f)(\tilde{u})&=\sum^m_{j=1} \frac{\Phi_m(\tilde{u})}{(u_j-\tilde{u})\Phi'_m(u_j)} f(u_j)+ f(\tilde{u}) \nonumber \\
&=-\sum^m_{j=1}\prod^m_{i\not=j} \frac{\tilde{u}-u_i}{u_j-u_i} f(u_j) +f(\tilde{u}),
\label{lang}
\end{align}
where $(R_m f)(\tilde{u})$ is called the interpolation error. Based on the identity (\ref{lang}), the idea is to approximate $f(\tilde{u})$ by a linear combination of the polynomials $-\sum^m_{j=1}\prod^m_{i\not=j} \frac{\tilde{u}-u_i}{u_j-u_i} f(u_j)$, and then control the error term $(R_m f)(\tilde{u})$ efficiently. See chapter 4 in \cite{DB} for more information.

With these preliminaries, we are able to present the proof of Theorem 1.

\begin{proof}[Proof of Theorem 1]
In order to obtain the approximate three-ball theorem for the solution in (\ref{main}) in elliptic periodic homogenization, we consider the Lagrange interpolation for $f(t)=P_0(tx_0\frac{r_1}{r_2}, y)$, where $0<r_1 <r_2<\frac{R}{4}<1$. We fix a point $x_0$ such that $ \sqrt{\langle\widehat{A}^{-1}x_0, x_0\rangle}=|Sx_0|\leq r_2$. We approximate $P_0(x_0, y)$ by a linear combination of the form $\sum^m_{i=1} c_i P_0(x_i, y)$ with $|S x_i|\leq r_1$. Then we need to estimate the sum of the absolute values of the coefficients $c_i$ in the linear combination and the error $(R_m P_0)(x_0, \ y)$ of the approximation.

 We choose points $x_i=t_i x_0\frac{r_1}{r_2}$ on the segment $[0, x_0\frac{r_1}{r_2}]$ with $t_i\in (0, \ 1).$
We select $u_i= t_i$ in the definition of $\Phi_m$ in (\ref{defin}) and $\tilde{u}=\frac{r_2}{r_1}$. Define
\begin{align}
c_j=\prod^m_{i\not =j} \frac{r_2r_1^{-1}-t_i}{ t_j- t_i}.
\end{align}
Since $0<t_i<1$, direct calculations show that
\begin{align}
|c_j|\leq \frac{(r_2r_1^{-1})^m}{|\Phi'_m(t_j)|}.
\end{align}
To further estimate $|c_j|$, we choose $t_i$ to be the Chebyshev nodes, i.e. $t_i=\cos (\frac{(2i-1)\pi}{2m})$.
Then we can write
\begin{align*}\Phi_m(t)=2^{1-m} T_m(t),\end{align*}
 where $T_m$ is the Chebyshev polynomial of the first kind. We also know that
\begin{align}
\Phi'_m(t)=m2^{1-m} U_{m-1}(t),
\label{cheby}
\end{align}
where $U_{m-1}$ is the Chebyshev polynomial of the second kind. See e.g. section 3.2.3 in \cite{DB}. At each $t_i$, we have
\begin{align*}
U_{m-1}(t_i)= U_{m-1} \big( \cos \frac{(2i-1)\pi}{2m}\big)=\frac{\sin \frac{(2i-1)\pi}{2}}{\sin \frac{(2i-1)\pi}{2m}}=\frac{(-1)^{i-1}}{\sin \frac{(2i-1)\pi}{2m} }.
\end{align*}
By (\ref{cheby}), it follows that
\begin{align*}
|\Phi'_m(t_i)|\geq m 2^{1-m}.
\end{align*}
Therefore, we can show that
\begin{align}
|c_j|\leq (2 m)^{-1} (\frac{2r_2}{r_1})^m.
\label{ccc}
\end{align}

To estimate the error of the approximation $(R_m P_0)(x_0, \ y)$, we do an analytic extension of the function $f(t)=P_0( t x_0 \frac{r_1}{r_2}, y)$ to the disc of radius $\frac{R}{2r_1}$ centered at the origin in the complex plane $\mathcal{C}$. Note that $P_0(x,\ y)$ is independent of $\epsilon$. By the explicit form of $P_0(x,\ y)$ in (\ref{poiss}) for $\partial E_R$, we have
\begin{align}
f(z)=\gamma_d |S| \frac{R^2-|Sx_0|^2 r_1^2 r^{-2}_2 z^2}{R | r_1 r_2^{-1} zSx_0-Sy|^d}.
\end{align}
If $R$ is fixed, then
\begin{align}
|f(z)|\leq CR^{-(d-1)}
\end{align}
in the disc. Hence, $f(z)$ is bounded. By (\ref{defin}), we can also see that
\begin{align}
|\Phi_m(z)|\geq \big( (\frac{R}{2r_1})-1\big)^m \quad \mbox{on the circle} \ |z|=\frac{R}{2r_1}.
\label{lower}
\end{align}
and \begin{align}
|\Phi_m(\frac{r_2}{r_1})|\leq (\frac{r_2}{r_1})^m.
\label{upper}\end{align} By the identity (\ref{lang}), we estimate the interpolation error as follows
\begin{align}
|(R_m P_0)(x_0, \ y)|&=|P_0(x_0, \ y)-\sum^m_{i=1} c_i P_0(x_i, \ y)| \nonumber \\
&=|f(\frac{r_2}{r_1})-\sum^m_{i=1} c_i f(t_i)|\nonumber \\
&=|\frac{1}{2\pi i}\int_{|z|=\frac{R}{2r_1}}\frac{\Phi_m(r_2 r_1^{-1})f(z) }{(z-r_2 r_1^{-1})\Phi_m(z)}\, dz|\nonumber \\
&\leq C \frac{2^m r_2^m}{R^{d-2} (R-2r_1)^m (R-2r_2)} \nonumber \\
&\leq C \frac{2^m r_2^m}{R^{d+m-1}},
\label{possi}
\end{align}
where we have used estimates (\ref{lower}), (\ref{upper}), the assumption that $0<r_1<r_2<\frac{R}{4}<R$ in the last inequality, and the constant $C$ in the last inequality does not depend on $m$.
Note that (\ref{ccc}) yields
\begin{align}
\sum^m_{j=1}|c_j|\leq 2^{-1} (\frac{2r_2}{r_1})^m.
\label{summ}
\end{align}

With these estimates for the Poisson kernel, we are going to estimate the supremum $u_\epsilon$ in ellipsoids.
We can write the solutions of (\ref{main}) in $E_R$ as
\begin{align}
u_\epsilon(x)=\int_{\partial E_R} P_\epsilon (x, \ y) u_\epsilon(y)\,dy.
\end{align}
Thanks to (\ref{rewri}), we can split the last integral as
\begin{align}
| \int_{\partial E_R} P_\epsilon (x, \ y) u_\epsilon(y)\,dy|&\leq |\int_{\partial E_R} \sum^m_{i=1} c_i P_\epsilon(x_i, \ y) u_\epsilon(y)\, dy| \nonumber \\& + |\int_{\partial E_R}\big(P_\epsilon(x, \ y)- \sum^m_{i=1} c_i P_\epsilon(x_i, \ y)\big)  u_\epsilon(y)\, dy| \nonumber \\
&\leq \sum^m_{i=1}|c_i||u_\epsilon(x_i)|+\int_{\partial E_R}|P_0(x, \ y)- \sum^m_{i=1} c_i P_0(x_i, \ y)||w_\epsilon(y)||u_\epsilon(y)|\, dy\nonumber \\
&+\int_{\partial E_R}|R_\epsilon(x, \ y)- \sum^m_{i=1} c_i R_\epsilon(x_i, \ y)||u_\epsilon(y)|\, dy.
\end{align}
Let $x=x_0$. By the relations of $x_i=t_i x_0 \frac{r_1}{r_2}$ and $|Sx_0|\leq r_2$, then all $x_i\in E_{r_1}$. Taking into account the estimates (\ref{www1}), (\ref{RRR}),  (\ref{possi}) and (\ref{summ}) gives that
\begin{align}
|u_\epsilon(x_0)|\leq (\frac{2r_2}{r_1})^m \sup_{E_{r_1}}|u_\epsilon|+ C \frac{2^m r_2^m}{R^{m}}\sup_{E_{R}}|u_\epsilon|+ C\epsilon \ln (\epsilon^{-1}+2)(\frac{2r_2}{r_1})^m \sup_{E_{R}}|u_\epsilon|,
\end{align}
where those $C$ do not depend on $m$, $r_1$, $r_2$, or $R$.  Since $x_0$ is an arbitrary point in $E_{r_2}$, it follows that
\begin{align}
\sup_{E_{r_2}}|u_\epsilon|\leq C\big\{(\frac{2r_2}{r_1})^m \sup_{E_{r_1}}|u_\epsilon|+ (\frac{2 r_2}{R})^m\sup_{E_{R}}|u_\epsilon|+ \epsilon \ln (\epsilon^{-1}+2)(\frac{2r_2}{r_1})^m \sup_{E_R}|u_\epsilon|      \big\}.
\label{mini}
\end{align}

Next we aim to minimize the summation of the terms in the right hand side of (\ref{mini}) by choosing the integer value $m$. For ease of notations,
let
\begin{align}
\sup_{E_{r_1}}|u_\epsilon|=\delta, \quad \sup_{E_{R}}|u_\epsilon|=M.
\end{align}
First of all, we choose a value of $m$ such that
\begin{align}
(\frac{2r_2}{r_1})^m \delta=\frac{(2r_2)^m}{R^{m}}M.
\end{align}
Solving the equality gives that
\begin{align*}
m=\frac{\ln M/\delta}{\ln R/r_1}.
\end{align*}
We  define an integer value $$m_0=\lfloor\frac{\ln M/\delta}{\ln R/r_1}\rfloor+1,$$ where $\lfloor \cdot \rfloor$ denotes its integer part.  We split the discussion of the minimization into  two cases. \medskip \\
\textbf{Case 1}: The case $\epsilon \ln (\epsilon^{-1}+2)(\frac{2r_2}{r_1})^{m_0} \leq   (\frac{2r_2}{R})^{m_0} $. \medskip \\
In this case, let $m=m_0$ in the estimate (\ref{mini}). The third term can be incorporated into the second term in the right hand side of (\ref{mini}). It follows that
\begin{align}
\sup_{E_{r_2}}|u_\epsilon|&\leq C\big\{(\frac{2r_2}{r_1})^{m_0} \delta+ (\frac{2 r_2}{R})^{m_0} M   \big\} \nonumber \\
&\leq C (\frac{2 r_2}{R}) M^{1-\alpha} \delta^\alpha,
\label{case1}
\end{align}
where \begin{align}\alpha= \frac{\ln \frac{R}{2r_2}} {\ln \frac{R}{r_1} }.
\label{alpha}\end{align} \medskip \\

\textbf{Case 2}: The case $\epsilon \ln (\epsilon^{-1}+2)(\frac{2r_2}{r_1})^{m_0} >(\frac{2r_2}{R})^{m_0} $. \medskip \\
In this case,  from the definition of $m_0$, we see that
\begin{align}
\epsilon \ln (\epsilon^{-1}+2)> \frac{\delta r_1}{MR}.
\label{value}
\end{align}
That is,
\begin{align}
\sup_{E_{r_1}}|u_\epsilon|\leq \epsilon \ln (\epsilon^{-1}+2)\frac{R}{r_1} \sup_{E_{R}}|u_\epsilon|.
\label{simp}
\end{align}
In order to obtain the minimum for the terms in the right hand sides of (\ref{mini}), we choose the value of $\hat{m}$ such that
\begin{align}
\epsilon \ln (\epsilon^{-1}+2)(\frac{2r_2}{r_1})^{\hat{m}} =(\frac{2r_2}{R})^{\hat{m}}.
\end{align}
Solving the equality yields that
\begin{align}
\hat{m}=\frac{ \ln \big( \epsilon \ln (\epsilon^{-1}+2)\big)}{\ln \frac{r_1}{R}}.
\end{align}
We define the integer value as
\begin{align}
m_1=\lfloor\frac{ \ln \big( \epsilon \ln (\epsilon^{-1}+2)\big)}{\ln \frac{r_1}{R}}\rfloor+1.
\end{align}
Substituting $m=m_1$ in the inequality (\ref{mini}) again, we can incorporate the second term into the third term in the right hand side of (\ref{mini}). Taking (\ref{simp}) into account, we obtain that
\begin{align}
\sup_{E_{r_2}}|u_\epsilon|&\leq C\big\{ (\frac{2r_2}{r_1})^{m_1}\epsilon \ln (\epsilon^{-1}+2)\frac{R}{r_1} M+  \epsilon \ln (\epsilon^{-1}+2)                       (\frac{2r_2}{r_1})^{m_1} M \big \} \nonumber \\
&\leq C \frac{2r_2 R}{r_1^2}  \exp\{ \frac{\ln\frac{2r_2}{r_1} \ln [\epsilon \ln (\epsilon^{-1}+2)]} { \ln \frac{r_1}{R}} \}\epsilon \ln (\epsilon^{-1}+2)M  \nonumber \\
&\leq C  \frac{R^2 }{r^2_1}  [\epsilon \ln (\epsilon^{-1}+2)]^{\frac{\ln \frac{R}{2r_2}} {\ln \frac{R}{r_1} }} M \nonumber \\
&= C  \frac{R^2 }{r^2_1}  [\epsilon \ln (\epsilon^{-1}+2)]^{\alpha} M
\label{case2}
\end{align}
with \begin{align}\alpha= \frac{\ln \frac{R}{2r_2}} {\ln \frac{R}{r_1} }.
\end{align}
Combining the estimates in (\ref{case1}) and (\ref{case2}) in these two cases, we arrive at
\begin{align}
\sup_{E_{r_2}}|u_\epsilon|\leq C \big \{\frac{r_2}{R} (\sup_{E_{r_1}}|u_\epsilon|)^\alpha (\sup_{E_{R}}|u_\epsilon|)^{1-\alpha}+ \frac{R^2}{r^2_1}[\epsilon \ln (\epsilon^{-1}+2)]^\alpha \sup_{E_{R}}|u_\epsilon|\big\}
\label{soids}
\end{align}
in three different sizes of ellipsoids. By the assumption of $r_1$, $r_2$ and $R$, we also see that $0<\alpha<1$. This completes the proof of Theorem 1.
\end{proof}
Now we are ready to show the proof of Corollary 1.

\begin{proof}[Proof of Corollary 1]

We make use of the approximate three-ball inequality in ellipsoids in (\ref{soids}). We choose  $0<r_1\leq \frac{r_1}{\lambda}<r_2<\frac{R}{4}<\frac{R}{4\lambda}$ for $R<\sqrt{\lambda}$. By the relation of ellipsoids and balls in (\ref{ellba}), we have
\begin{align}
\sup_{\mathbb B_{\sqrt{\lambda} r_2}}|u_\epsilon|\leq C \big \{\frac{r_2}{R} (\sup_{\mathbb B_{r_1/\sqrt{\lambda}}}|u_\epsilon|)^\alpha (\sup_{\mathbb B_{R/\sqrt{\lambda}}}|u_\epsilon|)^{1-\alpha}+ \frac{R^2}{r^2_1}[\epsilon \ln (\epsilon^{-1}+2)]^\alpha \sup_{\mathbb B_{R/\sqrt{\lambda}}}|u_\epsilon|\big\}.
\end{align}

Let $R_1=r_1/\sqrt{\lambda}$, $R_2=\sqrt{\lambda} r_2$ and $R_3=R/\sqrt{\lambda}$, Then $0<R_1<R_2<\frac{R_3}{4}<\frac{1}{4}$. The last inequality implies that
\begin{align}
\sup_{\mathbb B_{R_2}}|u_\epsilon|\leq C \big \{\frac{R_2}{R_3} (\sup_{\mathbb B_{R_1}}|u_\epsilon|)^\beta (\sup_{\mathbb B_{R_3}}|u_\epsilon|)^{1-\beta}+ \frac{R_3^2}{R^2_1}[\epsilon \ln (\epsilon^{-1}+2)]^\beta \sup_{\mathbb B_{R_3}}|u_\epsilon|\big\},
\label{newfind}
\end{align}
where \begin{align} \beta= \frac{\ln \frac{\lambda R_3}{2R_2}} {\ln \frac{R_3}{R_1}}\end{align} is derived from the $\alpha$ value in (\ref{alpha}) and $C$ depends only on $\lambda$ and $(\tau, \mu)$.
It is easy to see that $\beta<1$. By the value of $\lambda$, it is possible that $\beta<0$. However, the second term in the right hand side of (\ref{newfind}) implies such inequality holds trivially when $\epsilon$ is sufficiently small.
 To have $\beta>0$, we need to have $R_2<\frac{\lambda R_3}{2}$. Together with $R_2<\frac{R_3}{4}<\frac{1}{4}$,
 we choose $0<R_1<R_2<\frac{\lambda R_3}{4}<\frac{\lambda}{4}$. Thus, the inequality (\ref{newfind}) holds with $0<\beta<1$.
This completes the proof of Corollary.

\end{proof}
From Corollary 1,  we can give the proof of Corollary 3.
\begin{proof}[Proof of Corollary 3] We introduce a new function $v_{\epsilon}(x, t)$ as
\begin{align*}
v_{\epsilon}(x, t)= e^{\sqrt{\lambda_{\epsilon, k}} t  }u_\epsilon(x) \quad \quad \mbox{in}  \ \mathbb B_{10}\times (-10, \ 10).
\end{align*}
From eigenvalue type equation (\ref{eigen}), the new function $v_{\epsilon}(x, t)$ solves the equation
\begin{align}
\mathcal{L}_\epsilon v_\epsilon- \partial^2_t v_\epsilon=0 \quad \quad \mbox{in}  \ \mathbb B_{10}\times (-10, \ 10).
\label{trans}
\end{align}
This new homogenization equation (\ref{trans}) has the coefficient matrix
$$\begin{pmatrix}
(a_{ij})_{d\times d}  & 0 \medskip\\
0 & 1
\end{pmatrix}$$
which also satisfies conditions (\ref{ellip}), (\ref{perio}) and (\ref{holder}).
Let $\tilde{\mathbb B}_r$ be the ball with radius $r$ in $\mathbb B_{10}\times (-10, \ 10)$. We may write $\tilde{\mathbb B}_r$ as $\mathbb B_r \times (-r, \ r)$.
From Corollary 1, it holds that
\begin{align}
\|v_\epsilon\|_{L^\infty(\tilde{\mathbb B}_{R_2})}\leq C\big\{\frac{ R_2}{R_3} \|v_\epsilon\|^\beta_{L^\infty(\tilde{\mathbb B}_{R_1})} \|v_\epsilon\|^{1-\beta}_{L^\infty(\tilde{\mathbb B}_{R_3})} +\frac{R^2_3}{R^2_1} [\epsilon \ln (\epsilon^{-1}+2)]^\beta \|v_\epsilon\|_{L^\infty(\tilde{\mathbb B}_{R_3})}\big\}
\end{align}
for  $R_1, R_2, {R_3}, \beta$ given in Corollary 1.
By the definition of $v_\epsilon(x, t)$ and $\tilde{\mathbb B}_r$, we obtain that
\begin{align}
e^{-\sqrt{\lambda_{\epsilon, k}} R_2 }\|u_\epsilon\|_{L^\infty({\mathbb B}_{R_2})}&\leq C
\big\{  \frac{ R_2}{R_3} e^{ \sqrt{\lambda_{\epsilon, k}}R_3}\|u_\epsilon\|^\beta_{L^\infty({\mathbb B}_{R_1})} \|u_\epsilon\|^{1-\beta}_{L^\infty({\mathbb B}_{R_3})} \nonumber  \\ &+ \frac{R^2_3}{R^2_1} e^{ \sqrt{\lambda_{\epsilon, k}}R_3 }[\epsilon \ln (\epsilon^{-1}+2)]^\beta \|u_\epsilon\|_{L^\infty({\mathbb B}_{R_3})}\big\}.
\end{align}
Thus, the corollary follows.
\end{proof}

Let us show another consequence of the three-ball inequality in Corollary 1, that is, the approximate propagation of smallness estimates.

\begin{proof}[Proof of Corollary 4] As a consequence of De Giorgi-Nash estimates, the solution $u_\epsilon$ in (\ref{main}) is a continuous function.
Hence there exists $\bar x\in \mathbb B_9$ such that $u_\epsilon(\bar x)=\sup_{\mathbb B_9}|u_\epsilon(x)|$. We select a sequence of balls with radius $2r$ centered at $x_1=0, \cdots, x_m$ so that $x_{i+1}\in \mathbb B_r(x_i)$, $\mathbb B_r(x_{i+1})\subset \mathbb B_{2r}(x_i)$ and  $\bar x\in \mathbb B_{2r}(x_m)$. The number $m$ depends on $r$. By the way we choose $x_{i+1}$, it holds that
\begin{align}
\|u_\epsilon\|_{L^\infty( \mathbb B_r(x_{i+1}))}\leq \|u_\epsilon\|_{L^\infty( \mathbb B_{2r}(x_i))}.
\label{infnorm}
\end{align}
Applying the three-ball inequality (\ref{result2}) for $x_1=0$ and using that $\|u_\epsilon\|_{L^\infty(\mathbb B_{10})}\leq 1$, we have
\begin{align}
\|u_\epsilon\|_{L^\infty(\mathbb B_{2r}(x_1))}\leq C\{\delta^\beta+ [\epsilon\ln (\epsilon^{-1}+2)]^\beta\}
\label{again}
\end{align}
 with $\beta=\frac{\ln \frac{9}{4}} {\ln \frac{9}{\lambda}}$. Then we choose $x_2\in \mathbb B_{2r}(x_1)$ such that $\mathbb B_r({x_2})\subset \mathbb B_{2r}(x_1)$. Thus,
\begin{align}
\|u_\epsilon\|_{L^\infty( \mathbb B_r(x_2))}\leq \|u_\epsilon\|_{L^\infty( \mathbb B_{2r}(x_1))}.
\label{using}
\end{align}
Applying  the three-ball inequality (\ref{result2}) for balls centered at $x_2$ and using (\ref{again}) and (\ref{using}), we have
\begin{align}
\|u_\epsilon\|_{L^\infty( \mathbb B_{2r}(x_2))}&\leq C \{C\delta^\beta+ C[\epsilon\ln (\epsilon^{-1}+2)]^\beta\}^\beta+ C [\epsilon\ln (\epsilon^{-1}+2)]^\beta \nonumber \\
&=C^{\beta+1} \delta^{\beta^2}+ C^{\beta+1} [\epsilon\ln (\epsilon^{-1}+2)]^{\beta^2}+ C[\epsilon\ln (\epsilon^{-1}+2)]^\beta.
\end{align}

Iterating this argument with three-ball inequality at points $x_3, \cdots,$ up to $x_m$ and using (\ref{infnorm}) and the fact that
$\|u_\epsilon\|_{L^\infty(\mathbb B_{10})}\leq 1$, we obtain that
\begin{align}
\|u_\epsilon\|_{L^\infty( \mathbb B_{2r}(x_m))}\leq C^{\sum^m_{i=1}\beta^{m-i}} \delta^{\beta^m}+\sum^m_{j=1} C^{\sum^j_{i=1} \beta^{i-1}} [\epsilon \ln(\epsilon^{-1}+2)]^{\beta^j} \nonumber
\end{align}
Since $0<\beta<1$, we have
\begin{align}
\|u_\epsilon\|_{L^\infty( \mathbb B_{2r}(x_m))} \leq C^{\frac{1}{1-\beta}}\delta^{\beta^m}+ m C^{\frac{1}{1-\beta}}[\epsilon \ln(\epsilon^{-1}+2)]^{\beta^m}.
\end{align}
If $r$ is fixed, then $m$ is a fixed number. Thus,
\begin{align}
\|u_\epsilon\|_{L^\infty(\mathbb B_{9})}
\leq C^{\frac{1}{1-\beta}}\delta^{\beta^m}+ m C^{\frac{1}{1-\beta}}[\epsilon \ln(\epsilon^{-1}+2)]^{\beta^m}
\end{align}
for $\epsilon \ln(\epsilon^{-1}+2)\leq 1$.
\end{proof}

Compared with the previous approximate propagation of smallness result, we are able to show a full  propagation of smallness result with aid of the doubling inequality in Theorem A.
\begin{proof}[Proof of Corollary \ref{cornew}]
Let $r_k= 2^k r_0$. Choose $k_0$ such that $2^{k_0} r_0=\sqrt{\lambda}$. Then $k_0\approx \log_2(\frac{\sqrt{\lambda}}{r_0})$. Let $M_k=\|u_\epsilon\|_{L^\infty(\mathbb B_{r_k})}$ so that $M_0=\|u_\epsilon\|_{L^\infty(\mathbb B_{r_0})}$ and $M_{k_0}=\|u_\epsilon\|_{L^\infty(\mathbb B_{\sqrt{\lambda}})}$. Choosing
\begin{align}
N=\frac{\|u_\epsilon\|_{L^\infty(\mathbb B_{2/\sqrt{\lambda}})}} {\|u_\epsilon\|_{L^\infty(\mathbb B_{\sqrt{\lambda}})}},
\label{newN}
\end{align} from Theorem A, we have
\begin{align}
M_0=\frac{M_0}{M_1}\cdot \frac{M_1}{M_2}\cdot \frac{M_2}{M_3}\cdot \frac{M_{k_0-1}}{M_{k_0}}\cdot M_{k_0}\geq (\frac{1}{C(N)})^{k_0} M_{k_0}.
\end{align}
It also follows that
\begin{align}
M_{k_0}\leq M_0 C(N)^{k_0 }.
\label{itera}
\end{align}
From the $N$ value in (\ref{newN}), the boundedness assumption of $u_\epsilon$ and the definition of $M_{k_0}$, it follows that $N\leq \frac{1}{M_{k_0}}$. By (\ref{itera}) and monotonicity of $C(N)$, we arrive at
\begin{align}
M_{k_0}\leq M_0 C(\frac{1}{M_{k_0}})^{k_0 }.
\label{mkmk}
\end{align}
Suppose that $M_{k_0}>\eta_0$ and $M_0=\|u_\epsilon\|_{L^\infty(\mathbb B_{r_0})}\leq \delta_0$, from (\ref{mkmk}),
\begin{align}
M_{k_0}\leq \delta_0  C(\frac{1}{M_{k_0}})^{k_0 }\leq \delta_0  C(\frac{1}{\eta_0})^{k_0 }.
\end{align}
If we choose $\delta_0$ such that $\delta_0C(\frac{1}{\eta_0})^{k_0 }\leq \frac{\eta_0}{2}$, this gives that $M_{k_0}\leq \frac{\eta_0}{2}$, which contradicts the assumption $M_{k_0}>\eta_0$. Hence, if $\delta_0\leq \frac{\eta_0}{2}({ C(\frac{1}{\eta_0}) })^{-k_0}$, we must have $M_{k_0}\leq \eta_0$ as desired. This completes the corollary. Notice that the dependence of $\delta_0$ is not explicit.
\end{proof}

As a consequence of Corollary 1, we can give the proof of Theorem 2.
\begin{proof}[Proof of Theorem 2] To use the conclusion in Corollary 1, we extend the  equation (\ref{mainhalf}) in half balls to balls.
We do an even extension for the metric $A(y)$ across the half space $\{x| x_d=0\}$. Then we define a new coefficient matrix as
\begin{align}
\tilde{A}(\frac{x}{\epsilon})=\left\{ \begin{array}{llr}
A(\frac{x'}{\epsilon}, \ \frac{x_d}{\epsilon} ) \quad \quad &\mbox{for} \ x_d\geq 0, \nonumber \medskip \\
A(\frac{x'}{\epsilon}, \ \frac{-x_d}{\epsilon} ) \quad \quad &\mbox{for} \ x_d< 0,
\end{array}
\right.
\end{align}
where $x'=(x_1, \cdots, x_{d-1})$. We can see that $\tilde{A}(y)$ still satisfies the ellipticity condition (\ref{ellip}) and 1-periodic condition (\ref{perio}). To verify that $\tilde{A}(y)$ also satisfies the H\"older continuity (\ref{holder}), we just need to check the H\"older continuity for $\tilde{A}(y)$ in $y_d$ direction. Let $y_d^1>0$ and $y_d^2<0$. By the condition (\ref{holder}) for ${A}(y)$, we have
\begin{align}
|\tilde{A}(y', y_d^1)- \tilde{A}(y', y_d^2)|&=|{A}(y', y_d^1)-{A}(y', 0)+{A}(y', 0)- {A}(y', -y_d^2)| \nonumber \\
&\leq \tau |y_d^1|^\mu+\tau |y_d^2|^\mu \nonumber \\
&\leq 2\tau |y_d^1-y_d^2|^\mu.
\end{align}
This implies that H\"older continuity as (\ref{holder}) holds for $\tilde{A}(y)$.
For the Dirichlet boundary conditions, we do an odd extension for $u_\epsilon(x)$ across the half space $\{x|x_d=0\}$.
Then we have a new function $v_\epsilon(x)$ defined as follows,
\begin{align}
v_\epsilon(x)=\left\{ \begin{array}{llr}
u_\epsilon(x', \  x_d) \quad \quad &\mbox{for} \ x_d\geq 0, \nonumber \medskip \\
-u_\epsilon(x', \  -x_d)  \quad \quad &\mbox{for} \ x_d< 0.
\end{array}
\right.
\end{align}

Since the solution $u_\epsilon(x)$ in (\ref{mainhalf}) for the Dirichlet boundary conditions is Lipschitz continuous by the homogenization theory, e.g. \cite{AL}, the new $v_\epsilon(x)$ is Lipschitz continuous and is in $H^1(\mathbb B_{10})$. We claim that $v_\epsilon(x)$ satisfies the elliptic homogenization
\begin{align}
-{\rm div}\big(\tilde{A}(\frac{x}{\epsilon})\nabla v_\epsilon\big)=0 \quad \quad \mbox{in} \ \mathbb B_{10}.
\label{newequ}
\end{align}

To verify that $v_\epsilon(x)$ is a weak solution of  (\ref{newequ}), we need to check that the following holds,
\begin{align}
\int_{\mathbb B_{10}} \tilde{A}(\frac{x}{\epsilon}) \nabla v_\epsilon\cdot \nabla \phi(x) \, dx=0 \quad \mbox{for any} \ \phi(x)\in C^\infty_0(\mathbb B_{10}).
\label{show}
\end{align}

Recall that the weak solution $u_\epsilon(x)$ of (\ref{mainhalf}) for the Dirichlet boundary condition is given as
\begin{align}
\int_{\mathbb B_{10}^+} {A}(\frac{x}{\epsilon}) \nabla u_\epsilon\cdot \nabla \phi(x) \, dx=\int_{\partial\mathbb B_{10}^+\cap \{x_d= 0\} } {A}(\frac{x}{\epsilon}) \nabla u_\epsilon\cdot n \phi(x) \, d\sigma
\label{equiv}
\end{align}
for any $\phi(x)\in C^\infty_0(\mathbb B_{10}^+)$. By the assumption of $A(y)$ in (\ref{matrix}), the equation (\ref{equiv}) is equivalent as
\begin{align}
\int_{\mathbb B_{10}^+}\sum^{d-1}_{i,j=1} a_{ij}\frac{\partial u_\epsilon}{\partial x_j}\frac{\partial\phi}{\partial x_i}\,dx+\int_{\mathbb B_{10}^+} a_{dd}\frac{\partial u_\epsilon}{\partial x_d}\frac{\partial \phi}{\partial x_d}\, dx=-
\int_{\partial\mathbb B_{10}^+\cap \{x_d= 0\} } a_{dd} D_d u_\epsilon \phi \, d\sigma.
\label{Diri1}
\end{align}

In order to show (\ref{show}), we split the integral in the left hand side of (\ref{show}) in $\mathbb B_{10}$ into the integrations in
$\mathbb B_{10}^+$ and  $\mathbb B_{10}^-$, where $\mathbb B_{10}^-=\mathbb B_{10}\cap \{x_d< 0\}$. For any $\phi(x)\in C^\infty_0(\mathbb B_{10})$, by the definition of  $\tilde{A}(\frac{x}{\epsilon})$ and $v_\epsilon(x)$,
we have
\begin{align}
\int_{\mathbb B_{10}} \tilde{A}(\frac{x}{\epsilon}) \nabla v_\epsilon\cdot \nabla \phi(x) \, dx
=&\int_{\mathbb B_{10}^+} \tilde{A}(\frac{x}{\epsilon}) \nabla v_\epsilon\cdot \nabla \phi(x) \, dx+\int_{\mathbb B_{10}^-} \tilde{A}(\frac{x}{\epsilon}) \nabla v_\epsilon\cdot \nabla \phi(x) \, dx \nonumber \\
=&\int_{\mathbb B_{10}^+} {A}(\frac{x}{\epsilon}) \nabla u_\epsilon\cdot \nabla \phi(x) \, dx+\int_{\mathbb B_{10}^-} \tilde{A}(\frac{x}{\epsilon}) \nabla v_\epsilon\cdot \nabla \phi(x) \, dx. \label{Diri2}
\end{align}
Applying the integration by parts and change of variables to  transform the integration from $\mathbb B_{10}^-$ to $\mathbb B_{10}^+$, we can show that
\begin{align}
\int_{\mathbb B_{10}^-} \tilde{A}(\frac{x}{\epsilon}) \nabla v_\epsilon\cdot \nabla \phi(x) \, dx
=&- \int_{\mathbb B_{10}^-}\sum^{d-1}_{i,j=1} a_{ij}(\frac{x'}{\epsilon}, \frac{-x_d}{\epsilon})\frac{\partial u_\epsilon(x', -x_d)}{\partial x_j}\frac{\partial\phi}{\partial x_i}\,dx\nonumber \\ &-\int_{\mathbb B_{10}^-} a_{dd}(\frac{x'}{\epsilon}, \frac{-x_d}{\epsilon})\frac{\partial u_\epsilon(x', -x_d)}{\partial x_d}\frac{\partial \phi}{\partial x_d}\, dx \nonumber \\
=&- \int_{\mathbb B_{10}^+}\sum^{d-1}_{i,j=1} a_{ij}(\frac{x'}{\epsilon}, \frac{x_d}{\epsilon})\frac{\partial u_\epsilon}{\partial x_j}\frac{\partial\phi(x', -x_d)}{\partial x_i}\,dx \nonumber \\&-\int_{\mathbb B_{10}^+} a_{dd}(\frac{x'}{\epsilon}, \frac{x_d}{\epsilon})\frac{\partial u_\epsilon}{\partial x_d}\frac{\partial \phi(x', -x_d)}{\partial x_d}\, dx\nonumber \\
=&\int_{\partial\mathbb B_{10}^+\cap \{x_d= 0\} } a_{dd} D_d u_\epsilon \phi \, d\sigma,
\label{Diri3}
\end{align}
where we have used (\ref{Diri1}).
Then (\ref{show}) is verified by combining the estimates (\ref{Diri1}), (\ref{Diri2}) and (\ref{Diri3}). Thus, $v_\epsilon(x)$ is the solution of (\ref{newequ}).

Thanks to Corollary 1, we have the approximate three-ball inequality for $v_\epsilon$.
For $0<R_1<R_2<\frac{\lambda R_3}{4}<\frac{\lambda}{4}$, we obtain that
\begin{align}
\|v_\epsilon\|_{L^\infty(\mathbb B_{R_2})}\leq C\big\{\frac{ R_2}{R_3} \|v_\epsilon\|^\beta_{L^\infty(\mathbb B_{R_1})} \|v_\epsilon\|^{1-\beta}_{L^\infty(\mathbb B_{R_3})} +\frac{R^2_3}{R^2_1} [\epsilon \ln (\epsilon^{-1}+2)]^\beta \|v_\epsilon\|_{L^\infty(\mathbb B_{R_3})}\big\},
\label{sick}
\end{align}
where $\beta=\frac{\ln \frac{\lambda R_3}{2R_2}} {\ln \frac{R_3}{R_1} }$ and $C$ depends on $\lambda$ and $(\tau, \mu)$. Since $v_\epsilon$ is an odd extension of $u_\epsilon$, the estimates (\ref{halfspace}) for the Dirichlet boundary conditions in half balls follow from (\ref{sick}).

For the Neumann boundary conditions, we do an even extension.
We define a new function $v_\epsilon(x)$ as follows,
\begin{align}
v_\epsilon(x)=\left\{ \begin{array}{llr}
u_\epsilon(x', \  x_d) \quad \quad &\mbox{for} \ x_d\geq 0, \nonumber \medskip \\
u_\epsilon(x', \  -x_d)  \quad \quad &\mbox{for} \ x_d< 0.
\end{array}
\right.
\end{align}
The solution $u_\epsilon(x)$ in (\ref{mainhalf}) for the Neumann boundary conditions is Lipschitz continuous, e.g. \cite{KLS1}, \cite{AS}.
Hence, $v_\epsilon(x)$  is Lipschitz continuous and is in $H^1(\mathbb B_{10})$. We also claim that $v_\epsilon(x)$ is the solution for
\begin{align}
-{\rm div}\big(\tilde{A}(\frac{x}{\epsilon})\nabla v_\epsilon\big)=0 \quad \quad \mbox{in} \ \mathbb B_{10}.
\label{newequ2}
\end{align}
Thus, we need to show that
\begin{align}
\int_{\mathbb B_{10}} \tilde{A}(\frac{x}{\epsilon}) \nabla v_\epsilon\cdot \nabla \phi(x) \, dx=0 \quad \mbox{for any} \ \phi(x)\in C^\infty_0(\mathbb B_{10}).
\label{Neum1}
\end{align}

The weak solution $u_\epsilon(x)$ of (\ref{mainhalf}) for the Neumann boundary condition is given as
\begin{align}
\int_{\mathbb B_{10}^+} {A}(\frac{x}{\epsilon}) \nabla u_\epsilon\cdot \nabla \phi(x) \, dx=0
\end{align}
for any $\phi(x)\in C^\infty_0(\mathbb B_{10}^+)$. By the definition of $A(y)$, it also holds that
\begin{align}
\int_{\mathbb B_{10}^+}\sum^{d-1}_{i,j=1} a_{ij}\frac{\partial u_\epsilon}{\partial x_j}\frac{\partial\phi}{\partial x_i}\,dx+\int_{\mathbb B_{10}^+} a_{dd}\frac{\partial u_\epsilon}{\partial x_d}\frac{\partial \phi}{\partial x_d}\, dx=0.
\label{Neum2}
\end{align}
We split the integral in the left hand side of (\ref{Neum1}) in $\mathbb B_{10}$ into the integrations in
$\mathbb B_{10}^+$ and  $\mathbb B_{10}^-$ as
\begin{align}
\int_{\mathbb B_{10}} \tilde{A}(\frac{x}{\epsilon}) \nabla v_\epsilon\cdot \nabla \phi(x) \, dx
=&\int_{\mathbb B_{10}^+} \tilde{A}(\frac{x}{\epsilon}) \nabla v_\epsilon\cdot \nabla \phi(x) \, dx+\int_{\mathbb B_{10}^-} \tilde{A}(\frac{x}{\epsilon}) \nabla v_\epsilon\cdot \nabla \phi(x) \, dx \nonumber \\
=&\int_{\mathbb B_{10}^+} {A}(\frac{x}{\epsilon}) \nabla u_\epsilon\cdot \nabla \phi(x) \, dx+\int_{\mathbb B_{10}^-} \tilde{A}(\frac{x}{\epsilon}) \nabla v_\epsilon\cdot \nabla \phi(x) \, dx.
\label{Neum3}
\end{align}
By (\ref{Neum2}), the definition of $\tilde{A}(\frac{x}{\epsilon})$, $v_\epsilon(x)$ and the change of variables, we can show that
\begin{align}
\int_{\mathbb B_{10}^-} \tilde{A}(\frac{x}{\epsilon}) \nabla v_\epsilon\cdot \nabla \phi(x) \, dx
=&\int_{\mathbb B_{10}^-}\sum^{d-1}_{i,j=1} a_{ij}(\frac{x'}{\epsilon}, \frac{-x_d}{\epsilon})\frac{\partial u_\epsilon(x', -x_d)}{\partial x_j}\frac{\partial\phi}{\partial x_i}\,dx\nonumber \\ &+\int_{\mathbb B_{10}^-} a_{dd}(\frac{x'}{\epsilon}, \frac{-x_d}{\epsilon})\frac{\partial u_\epsilon(x', -x_d)}{\partial x_d}\frac{\partial \phi}{\partial x_d}\, dx \nonumber \\
=& \int_{\mathbb B_{10}^+}\sum^{d-1}_{i,j=1} a_{ij}(\frac{x'}{\epsilon}, \frac{x_d}{\epsilon})\frac{\partial u_\epsilon}{\partial x_j}\frac{\partial\phi(x', -x_d)}{\partial x_i}\,dx \nonumber \\&+\int_{\mathbb B_{10}^+} a_{dd}(\frac{x'}{\epsilon}, \frac{x_d}{\epsilon})\frac{\partial u_\epsilon}{\partial x_d}\frac{\partial \phi(x', -x_d)}{\partial x_d}\, dx\nonumber \\
=&0
\label{Neum4}
\end{align}
for any $\phi(x)\in C^\infty_0(\mathbb B_{10})$. Therefore, the combination of (\ref{Neum2}), (\ref{Neum3}) and (\ref{Neum4}) verifies  the claim  and  $v_\epsilon(x)$ is the solution in (\ref{newequ2}).
Following the same strategy as the case of the Dirichlet boundary conditions, the estimate (\ref{halfspace}) is arrived for equations with the Neumann boundary conditions.
\end{proof}

Following the ideas in Corollary 3 and Theorem 2, we can easily show the proof of Corollary \ref{corhalf}. For the completeness of the presentation, we present the main ideas in the argument.
\begin{proof}[Proof of Corollary \ref{corhalf}]
We consider a new function $v_{\epsilon}(x, t)$ as
\begin{align}
v_{\epsilon}(x, t)= e^{\sqrt{\lambda_{\epsilon, k}} t  }u_\epsilon(x) \quad \quad \mbox{in}  \ \mathbb B^+_{10}\times (-10, \ 10).
\label{newde}
\end{align}
Then the new function $v_{\epsilon}(x, t)$ satisfies the homogenization equation
\begin{align}
\left \{ \begin{array}{rlr}
-{\rm div}\big(A(\frac{x}{\epsilon})\nabla v_\epsilon\big)-\partial^2_t v_\epsilon=0 \quad \quad &\mbox{in} \ \mathbb B_{10}^+\times (-10, \ 10),  \medskip \\
v_\epsilon=0 \ \mbox{or} \ \frac{\partial v_\epsilon}{\partial \nu}=0 \quad \quad &\mbox{on} \ \big\{\partial\mathbb B_{10}^+\cap \{x_d= 0\}\big\}\times (-10, \ 10).
\end{array}
\right.
\label{eigenhalf1}
\end{align}

We denote the new coefficient matrix in $\mathbb B_{10}^+\times (-10, \ 10)$ as
\begin{align*}
\bar{A}(\frac{x}{\epsilon}, t) =\begin{pmatrix}
\big(a_{ij}(\frac{x}{\epsilon})\big)_{d\times d}  & 0 \medskip\\
0 & 1
\end{pmatrix}.
\end{align*}

Thus, the equation in (\ref{eigenhalf1}) can be written as
\begin{align}
-{\rm div}\big(\bar A(\frac{x}{\epsilon}, t)\nabla v_\epsilon\big)=0\quad \quad \mbox{in} \   \mathbb B_{10}^+\times (-10, \ 10).
\end{align}
We do an even extension for the matrix $\bar{A}(\frac{x}{\epsilon}, t)$ across the half space $\{(x, t)|x_d=0\}$ and write the new coefficient matrix  as $\tilde{\bar A}(\frac{x}{\epsilon}, t)$.
This  coefficient matrix $\tilde{\bar A}(\frac{x}{\epsilon}, t)$ still satisfies the conditions (\ref{ellip}), (\ref{perio}) and (\ref{holder}).   As in the proof of Theorem 2, we do an odd extension for $v_\epsilon$ in (\ref{eigenhalf1}) for the Dirichlet boundary conditions across the half space
 $\{(x, t)|x_d=0\}$ and write it as $\tilde{v}_\epsilon(x, t)$.
We identify the half ball with radius $r$ in the cylinder $\mathbb B_{10}^+\times (-10, \ 10)$ as  $\mathbb B_{r}^+\times (-r, \ r)$. Recall that   $\mathbb B_{r}\times (-r, \ r)$ is written as $\tilde{\mathbb {B}}_{r}$.  We can check as in the proof of Theorem 2 that $\tilde{v}_\epsilon(x, t)$ satisfies the equation
\begin{align}
-{\rm div}\big(\tilde{\bar A}(\frac{x}{\epsilon}, t)\nabla \tilde{v}_\epsilon\big)=0 \quad \quad \mbox{in} \ \tilde{\mathbb {B}}_{10}.
\label{another}
\end{align}
Thanks to Corollary 1, the solutions $\tilde{v}_\epsilon$ in (\ref{another}) satisfy the three-ball inequality
\begin{align}
\|\tilde{v}_\epsilon\|_{L^\infty(\tilde{\mathbb {B}}_{R_2})}\leq C\big\{\frac{ R_2}{R_3} \|\tilde{v}_\epsilon\|^\beta_{L^\infty(\tilde{\mathbb {B}}_{R_1})} \|\tilde{v}_\epsilon\|^{1-\beta}_{L^\infty(\tilde{\mathbb {B}}_{R_3})} +\frac{R^2_3}{R^2_1} [\epsilon \ln (\epsilon^{-1}+2)]^\beta \|\tilde{v}_\epsilon\|_{L^\infty(\tilde{\mathbb {B}}_{R_3})}\big\}
\end{align}
for $0<R_1<R_2<\frac{\lambda R_3}{4}<\frac{\lambda}{4}$ with $\beta=\frac{\ln \frac{\lambda R_3}{2R_2}} {\ln \frac{R_3}{R_1} }$. Since we have done an odd extension for $\tilde{v}_\epsilon$,  the following inequality holds for ${v}_\epsilon(x, t)$,
\begin{align}
\|{v}_\epsilon\|_{L^\infty\big({\mathbb {B}}^+_{R_2}\times (-R_2, \ R_2)\big)}\leq &C\big\{\frac{ R_2}{R_3} \|{v}_\epsilon\|^\beta_{L^\infty\big({\mathbb {B}}^+_{R_1}\times (-R_1, \ R_1)\big)} \|{v}_\epsilon\|^{1-\beta}_{L^\infty\big(\tilde{\mathbb {B}}^+_{R_3}\times (-R_3, \ R_3)\big)} \nonumber \\ &+\frac{R^2_3}{R^2_1} [\epsilon \ln (\epsilon^{-1}+2)]^\beta \|{v}_\epsilon\|_{L^\infty\big({\mathbb {B}}^+_{R_3}\times (-R_3, \ R_3)\big)}\big\}.
\end{align}
By the definition of ${v}_\epsilon(x, t)$ in (\ref{newde}), we further obtain that
\begin{align}
\|u_\epsilon\|_{L^\infty(\mathbb B^+_{R_2})}&\leq C e^{2R_3 \sqrt{\lambda_{\epsilon, k}} }\big\{\frac{ R_2}{R_3} \|u_\epsilon\|^\beta_{L^\infty(\mathbb B^+_{R_1})} \|u_\epsilon\|^{1-\beta}_{L^\infty(\mathbb B^+_{R_3})} \nonumber \\ &+\frac{R^2_3}{R^2_1} [\epsilon \ln (\epsilon^{-1}+2)]^\beta \|u_\epsilon\|_{L^\infty(\mathbb B^+_{R_3})}\big\}.
\end{align}
Thus, (\ref{lastcon}) is arrived for solutions of (\ref{eigenhalf}) with the Dirichlet boundary conditions. For the equation (\ref{eigenhalf1}) with the Neumann boundary conditions, we do an even extension for $v_\epsilon(x, t)$. As in the proof of Theorem 2, the approximate three-ball inequality (\ref{lastcon}) can be obtained.

\end{proof}

\end{document}